\documentclass[12pt]{article}
\usepackage{tipa}
\usepackage{bbm}
\usepackage{mathrsfs}
\usepackage{amsfonts}
\usepackage{cite}
\usepackage{amssymb,amsmath,amsthm}
\allowdisplaybreaks

 \pagestyle{plain} \oddsidemargin -25pt
\evensidemargin -25pt \topmargin -40pt \textwidth 6.5truein
\textheight 9.35truein
\parskip .01 truein
\baselineskip 6pt

\def\cl{\centerline}

\numberwithin{equation}{section}
\theoremstyle{plain}
\newtheorem{theo}{Theorem}[section]
\newtheorem{rem}[theo]{Remark}
\newtheorem{lemm}[theo]{Lemma}

\theoremstyle{definition}

\newtheorem{defi}[theo]{Definition}

\begin{document}
\cl{\large\bf{ Structure of a class of Lie conformal algebras of
Block type}\footnote{Supported by a grant from China Scholarship
Council (201708645021),  National Natural Science
Foundation grants of China (11661063, 11401570), and the Fundamental Research Funds for the Central Universities (2019QNA34).\\
\indent\ Corresponding author(W. Wang): wwll@mail.ustc.edu.cn}}
\vspace{6pt}

\centerline{Wei Wang$\,^{\dag }$, Chunguang Xia$\,^{\ddag}$,
 Li Liu$\,^{\dag}$}  \vspace{5pt}

\centerline{\small $^{\dag}$School of Mathematics and Computer
Science, Ningxia University, Yinchuan 750021,  China }

\centerline{\small $^{\ddag}$School of Mathematics, China University
of Mining and Technology, Xuzhou 221116, China } \vspace{6pt}


 {\small
\parskip .005 truein
\baselineskip 3pt \lineskip 3pt

\noindent{{\bf Abstract:} Let $p$ be a nonzero complex number.
Recently, a class of infinite rank Lie conformal algebras
$\mathfrak{B}(p)$ was introduced in \cite{SXY2018}. In this paper,
we study the structure theory of this class of Lie conformal
algebras. Specifically, we completely determine the conformal
derivations, the conformal biderivations  and certain second
cohomologies of $\mathfrak{B}(p)$. \vspace{5pt}

\noindent{\bf Keywords:} conformal derivation, conformal
biderivation, second cohomology}

\noindent{Mathematics Subject Classification (2000)}: 17B40, 17B56,
17B68, 17B69 }
\parskip .001 truein\baselineskip 6pt \lineskip 6pt
\vspace{10pt}

\noindent{\large\bf 1.\ \ Introduction}%
\setcounter{section}{1}\setcounter{equation}{0} \vspace{10pt}

The Lie conformal algebra structure is an axiomatic description of
the operator product expansion of chiral fields in conformal field
theory. On one hand, the theory of Lie conformal algebras provides a
classification of formal distribution Lie algebras. One the other
hand, it is an adequate tool for the study of infinite-dimensional
Lie algebras satisfying the locality property. At present, the
structure theory and representation theory of finite Lie conformal
algebras have been well developed \cite{DK1998,CK1997,CKW1998}. One
of the fundamental works is the classification of finite simple Lie
conformal algebras. It was proved in \cite{DK1998} that a finite
simple Lie conformal algebra is isomorphic to either the Virasoro
conformal algebra, or a current conformal algebra. Besides, the
cohomology theory for Lie conformal algebras was developed in
\cite{BKV1999} and further studied in \cite{DK2009}.

Since the rank of a Lie conformal algebra may not suppose to be
finite, it is natural and necessary to study the infinite rank Lie
conformal algebras. However, the theory of infinite rank Lie
conformal algebras is far from being well developed. The most
important example of simple object is the general Lie conformal
algebra $gc_N$, which plays the same role as the general Lie algebra
$gl_N$ does in the theory of Lie algebras. Partially due to this
importance, $gc_N$ has been deeply studied from different
viewpoints. For example, the finite irreducible conformal modules
over $gc_N$ were classified by Kac, Radul and Wakimoto, see also
\cite{K1997, BM2013}; the finite growth modules over subalgebras of
$gc_N$ containing Virasoro conformal subalgebras were classified in
\cite{BM2013}; certain low dimensional cohomologies of $gc_N$ were
computed in \cite{Su2004}. Recent years, some infinite rank loop
$\ast$-Virasoro type Lie conformal algebras were constructed and
studied, such as the loop Virasoro type \cite{WCY2014},
Heisenberg-Virasoro type \cite{FSW2014} and Schr$\ddot{\rm
o}$dinger-Virasoro type \cite{FSX2016}.

More recently, in \cite{SXY2018}, the authors introduced another
class of infinite rank Lie conformal algebras $\mathfrak{B}(p)$,
whose annihilation algebras have close relation with the Lie
algebras $\mathcal{B}(q)$ of Block type \cite{SXX2012, XYZ2012}.
Particularly, they obtained that a finite irreducible conformal
module over $\mathfrak{B}(p)$ admits a nontrivial extension of a
finite conformal module over Virasoro conformal subalgebra if
$p=-1$. In this paper, we focus on the structure theory of
$\mathfrak{B}(p)$. Specifically, we shall determine the conformal
derivations, the conformal biderivations and certain second
cohomologies of $\mathfrak{B}(p)$, respectively.

This paper is organized as follows. In Section 2, we recall some
definitions on Lie conformal algebra and some properties of
$\mathfrak{B}(p)$. Then, in Sections 3 and 4, we study the conformal
derivations and the conformal biderivations of $\mathfrak{B}(p)$,
respectively. In particular, we show that there exist non-inner
conformal derivations (see Theorem~\ref{theo1}) and non-inner
conformal biderivations (see Theorem~\ref{theo2}) if and only if $p$
is a negative integer. Finally, in Section 5, we compute the second
cohomologies of $\mathfrak{B}(p)$ with coefficients in its trivial
module $\mathbb{C}$ (see Theorem~\ref{theo3}), and certain finite
irreducible conformal modules (see Theorem~\ref{theo4}),
respectively. Again, our results indicate that $\mathfrak{B}(p)$
with a negative parameter $p$ is essentially different from those
with other parameters, and among these $\mathfrak{B}(p)$, the case
with $q=-1$ is the most distinctive one.

 Throughout this paper, we denote by
$\mathbb{Z}$, $\mathbb{Z}^{+}$ and $\mathbb{Z}^-$ the sets of
integers, non-negative integers and negative integers, respectively.
Let $\mathbb{C}[\partial]$ be the ring of polynomials in the
indeterminate $\partial$. \vspace{10pt}

\noindent{\large\bf 2.\ \ Preliminaries}%
\setcounter{section}{2}\setcounter{equation}{0} \vspace{10pt}

In this section, we first recall some definitions related to Lie
conformal algebra. Then we recall the Lie conformal algebra
$\mathfrak{B}(p)$ and its some properties. For more details, the
reader can refer to \cite{DK1998,SXY2018} and the references
therein.
\begin{defi} A Lie conformal algebra is a
$\mathbb{C}[\partial]$-module $R$, endowed with a $\lambda$-bracket,
that is a $\mathbb{C}$-linear map $R\otimes
R\rightarrow\mathbb{C}[\lambda]\otimes R$, denoted by $a\otimes
b\mapsto[a_{\lambda}b]$, satisfying the following properties
\begin{eqnarray*}
&\label{C1}[\partial a_{\lambda}b]=-\lambda[a_{\lambda}b],\ \ \
[a_{\lambda}\partial b]=(\partial+\lambda)[a_{\lambda}b]\ \
({\rm conformal\ sesquilinearity}),&\\
&\label{C2}[a_{\lambda}b]=-[b_{-\lambda-\partial}a]\ \ \ ({\rm skew{\rm-}commutativity}),&\\
&\label{C3}[a_{\lambda}[b_{\mu}c]]=[[a_{\lambda}b]_{\lambda+\mu}c]+[b_{\mu}[a_{\lambda}c]]\
\ \ ({\rm Jacobi\ identity}),&
\end{eqnarray*}
for any $a,b,c\in R$. If there exists a finite generating subset
$S\subset R$ such that $S$ generates $R$ as a
$\mathbb{C}[\partial]$-module, then we call that $R$ is a finite
rank Lie conformal algebra. Otherwise, it is called infinite.
\end{defi}

What we mainly consider in this paper is a class of infinite  Lie
conformal algebras
$\mathfrak{B}(p)=\oplus_{i\in\mathbb{Z}^+}\mathbb{C}[\partial]L_i$
with $p$ being a nonzero complex number, satisfying the following
 $\lambda$-brackets:
\begin{eqnarray*}
[{L_i}\,_\lambda L_j]=((i+p)\partial+(i+j+2p)\lambda)L_{i+j}.
\end{eqnarray*}
The Lie conformal algebra $\mathfrak{B}(p)$ contains a Lie conformal
subalgebra
\begin{equation*}
Vir={\rm span}_{\mathbb{C}[\partial]}\{p^{-1}L_0\},
\end{equation*}
which is isomorphic to the Virasoro Lie conformal algebra
\cite{DK1998}. Moreover, if $p$ is a negative integer, then
$\mathfrak{B}(p)$ has another Lie conformal subalgebra
\begin{equation*}
\mathcal{HV}={\rm span}_{\mathbb{C}[\partial]}\{p^{-1}L_0,L_{-p}\},
\end{equation*}
which is isomorphic to Heisenberg-Virasoro conformal algebra
\cite{YW2016}.  In particular, the case $\mathfrak{B}(1)$ is a
maximal subalgebra of the associated graded conformal algebra ${\rm
gr}gc_1$ of the filtered algebra $gc_1$ \cite{SY2011}. Besides, for
any integer $n\geq1$, $\mathfrak{B}(-n)$ contains a series of finite
Lie conformal quotient algebras (see \cite{SXY2018}, Section 2.2).
\begin{defi}
The annihilation algebra $\mathcal{A}(R)$ of a Lie conformal algebra
$R$ is a Lie algebra with $\mathbb{C}$-basis $\{a_{(n)}\,|\,a\in
R,n\in\mathbb{Z}^+\}$ and relations
\begin{equation*}
[a_{(m)},b_{(n)}]=\sum_{k\in\mathbb{\mathbb{Z}^+}}
\left(\!\!\begin{array}{ll}m\\k\end{array}\!\!\right)(a_{(k)}b)_{(m+n-k)},\
\ (\partial a)_{(n)}=-na_{(n-1)},
\end{equation*}
where $a_{(k)}b$ is called the $k$-product, given by $[a_{\lambda}
b]=\sum_{k\in\mathbb{Z}^+}\frac{\lambda^{k}}{k!}a_{(k)}b$.
\end{defi}

Recall from \cite{SXY2018} that the annihilation algebra
$\mathcal{A}(\mathfrak{B}(p))$ of $\mathfrak{B}(p)$ is spanned by
$\{L_{i,m}\,|\,i\in\mathbb{Z}^+,m\in\mathbb{Z}_{\geq-1}\}$ with
relations
\begin{eqnarray*}
[L_{i,m},L_{j,n}]=((j+p)(m+1)-(i+p)(n+1))L_{i+j,m+n}.
\end{eqnarray*}
The Lie algebra $\mathcal{A}(\mathfrak{B}(p))$ is related to the Lie
algebra $\mathcal{B}(q)$ of Block type \cite{SXX2012,SXX2013},
Hence, this Lie conformal algebra $\mathfrak{B}(p)$ is  called a
{\it Lie conformal algebra of Block type}\cite{SXY2018}.

 \vspace{10pt}

\noindent{\large\bf 3.\ \ Conformal derivations of $\mathfrak{B}(p)$
}\setcounter{section}{3}\setcounter{theo}{0}\setcounter{equation}{0}
\vspace{10pt} 

Recall \cite{DK1998} that a $\mathbb{C}$-linear map
$D_\lambda:\mathfrak{B}(p)\rightarrow \mathfrak{B}(p)[\lambda]$ is
called a {\it conformal derivation} of $\mathfrak{B}(p)$ if
\begin{eqnarray*}
&D_\lambda(\partial a)=(\partial+\lambda)(D_\lambda a),&\\
&D_{\lambda}[a_\mu b]=[(D_\lambda
a)_{\lambda+\mu}b]+[a_\mu(D_\lambda b)],&
\end{eqnarray*}
for any $a,b\in \mathfrak{B}(p)$. Denote by ${\rm
Der}(\mathfrak{B}(p))$ the space of all conformal derivations. For
any $a\in \mathfrak{B}(p)$, one can define a conformal derivation
$({\rm \bf ad}a)_\lambda$ of $\mathfrak{B}(p)$ by
$$({\rm\bf ad}a)_{\lambda}b=[a_{\lambda}b],\ b\in R.$$
All derivations of this kind are called {\it inner}. Denoted by
${\rm Inn}(\mathfrak{B}(p))$ the space of all inner derivations.

Let $D_{\lambda}\in{\rm Der}(\mathfrak{B}(p))$. Suppose that
\begin{equation*}
D_\lambda(L_0)=\sum_{j\in\mathbb{Z}^+}a_j(\lambda,\partial)L_j
\end{equation*}
for some $a_j(\lambda,\partial)\in\mathbb{C}[\lambda,\partial]$.

Next, we shall discuss the coefficients $a_j(\lambda,\partial)$.

By applying $D_\lambda$ to $[{L_0}_\mu L_0]=p(\partial+2\mu)L_0$ and
considering the coefficients of $L_j$, we can obtain
\begin{eqnarray}{\label{equa1}}
\begin{array}{lll}
&p(\partial+\lambda+2\mu)a_j(\lambda,\partial)\vspace{8pt}\\
=\!\!\!\!&((j+p)\partial+(j+2p)(\lambda+\mu))a_j(\lambda,-\lambda-\mu)
+(p\partial+(j+2p)\mu)a_j(\lambda,\partial+\mu).
\end{array}
\end{eqnarray}
Set $a_j(\lambda,\partial)=\sum_{k=0}^{m}
c_{j,k}(\lambda)\partial^k$ with $c_{j,m}(\lambda)\neq0$, where
$c_{j,k}(\lambda)\in\mathbb{C}[\lambda]$. Assume that $m>1$. Then
comparing the coefficients of $\partial^m$ in (\ref{equa1}), one can
get $(p\lambda-(j+mp)\mu) c_{j,m}(\lambda)=0$, which gives
$c_{j,m}(\lambda)=0$ since $p\neq0$. This contradicts with our
assumption. Thus, one can set
$a_j(\lambda,\partial)=c_{j,0}(\lambda)+c_{j,1}(\lambda)\partial$.
By substituting this into the equation (\ref{equa1}) and then
comparing the coefficients of $\partial$, we can obtain
\begin{equation}\label{equ00}
(j+p)c_{j,0}(\lambda)=(j+2p)\lambda c_{j,1}(\lambda).
\end{equation}

\vspace{6pt} (1) The case $p\notin\mathbb{Z}^-$.\vspace{6pt}

In this case, from (\ref{equ00}), we have
$c_{j,0}(\lambda)=\frac{j+2p}{j+p}\lambda c_{j,1}(\lambda)$. Thus
$a_j(\lambda,\partial)=(\partial+\frac{j+2p}{j+p}\lambda)c_{j,1}(\lambda)$.
Setting $\beta_j=\frac{c_{j,1}(-\partial)}{j+p}L_j$, then we can
check that $({\rm\bf
ad}\beta_j)_\lambda(L_0)=a_j(\lambda,\partial)$. Hence, by taking
$\gamma=\sum_{j\in\mathbb{Z}^+}\beta_j$ and letting
$\bar{D}_\lambda=D_\lambda-({\rm\bf ad}\gamma)_\lambda$, we can
obtain
\begin{equation}{\label{equ01}}
\bar{D}_\lambda(L_0)=0.
\end{equation}
Now, by using $\bar{D}_\lambda(L_0)=0$ and applying
$\bar{D}_\lambda$ to $[L_0\,_\mu L_j]=(p\partial+(j+2p)\mu)L_j$ with
$j>0$, one has
\begin{equation}\label{equ03}
[L_0\,_\mu \bar{D}_\lambda(L_j)]=(p(\partial+\lambda)+(j+2p)\mu)
\bar{D}_{\lambda}(L_j).
\end{equation}
Set
$\bar{D}_\lambda(L_j)=\sum_{k\in\mathbb{Z}^+}e_{j,k}(\lambda,\partial)L_k$
with $e_{j,k}\in\mathbb{C}[\lambda,\partial]$. Then considering the
coefficients of $L_k$ in (\ref{equ03}), one has
\begin{equation*}
(p\partial+(k+2p)\mu)e_{j,k}(\lambda,\partial+\mu)
=(p(\partial+\lambda)+(j+2p)\mu)e_{j,k}(\lambda,\partial).
\end{equation*}
Taking $\mu=0$ in the above equation, we have $p\lambda
e_{j,k}(\lambda,\partial)=0$. Since $p\neq0$, we have
$e_{j,k}(\lambda,\partial)=0$ for any $k\in\mathbb{Z}^+$ and $0\neq
j\in\mathbb{Z}^+$. Thus $\bar{D}_\lambda(L_j)=0$ for $0\neq
j\in\mathbb{Z}^+$. This togethers with (\ref{equ01}) gives
$\bar{D}_\lambda(L_j)=0$ for any $j\in\mathbb{Z}^+$, namely,
\begin{equation}\label{equ07}
D_\lambda(L_j)={(\rm\bf ad}\gamma)_\lambda(L_j),\ \
j\in\mathbb{Z}^+.
\end{equation}
\vspace{6pt}

(2) The case $p\in\mathbb{Z}^-$.\vspace{6pt}

In this case, it follows from (\ref{equ00}) and $p\neq0$ that
$c_{-p,1}(\lambda)=0$. Thus, there exists some
$c(\lambda)\in\mathbb{C}[\lambda]$ such that
$a_{-p}(\lambda,\partial)=c(\lambda)$. Denote by
$\bar{\gamma}=\sum_{-p\neq j\in\mathbb{Z}^+}\beta_j$, where
$\beta_j=\frac{c_{j,1}(-\partial)}{j+p}L_j$. Then, from the same
arguments as in Case (1), we can get
$\bar{D}_\lambda(L_0)=c(\lambda)L_{-p}$, where
$\bar{D}_\lambda=D_\lambda-{(\rm\bf ad}\bar{\gamma})_\lambda$. Let
$g(\lambda)=c(\lambda)-c(0)$, then $g(\lambda)$ divided by
$\lambda$. Thus, from setting
$\tilde{\gamma}=p^{-1}g(-\partial)(-\partial)^{-1}L_{-p}$, we can
get
\begin{equation}\label{equ02}
(\tilde{D}_\lambda(L_0)=cL_{-p},
\end{equation}
where $\tilde{D}_\lambda=\bar{D}_\lambda-{(\rm\bf
ad}\tilde{\gamma})_\lambda$, $c=c(0)$. Now, set
$\tilde{D}_\lambda(L_j)=\sum_{l\in\mathbb{Z}^+}d_{j,l}(\lambda,\partial)L_l$
with $d_{j,l}\in\mathbb{C}[\lambda,\partial]$ and $j>0$. Then, by
applying $\tilde{D}_\lambda$ to $[L_0\,_\mu
L_j]=(p\partial+(j+2p)\mu)L_j$ and comparing the coefficients of
$L_l$, one has
\begin{eqnarray*}
&&(p(\partial+\lambda)+(j+2p)\mu)d_{j,l}(\lambda,\partial)\\
&=\!\!\!&\delta_{l,-p+j}c(j+p)(\lambda+\mu)+(p\partial+(l+2p)\mu)d_{j,l}(\lambda,\partial+\mu).
\end{eqnarray*}
From discussions in Case (1), we get $d_{j,l}=0$ for $l\neq-p+j$.
Now assume that $l=-p+j$. Then, taking $l=-p+j$ and $\mu=0$, we can
get $d_{j,-p+j}(\lambda,\partial)=cp^{-1}(j+p)$. Thus, we obtain
$\tilde{D}_\lambda(L_j)=cp^{-1}(j+p)L_{-p+j}$ with $j>0$. Combining
this with (\ref{equ02}), we get $
\tilde{D}_\lambda(L_j)=cp^{-1}(j+p)L_{-p+j}$ for any
$j\in\mathbb{Z^+}$. Now, Define a $\mathbb{C}$-linear map
$D_{\lambda}^{p}$ as follows:
\begin{equation}\label{out}
D_{\lambda}^{p}(L_j)=(j+p)L_{-p+j},\ \ j\in\mathbb{Z}^+,\ \
p\in\mathbb{Z^-}.
\end{equation}
One can check that $D_{\lambda}^{p}$ is a non-inner conformal
derivation of $\mathfrak{B}(p)$. Thus, we obtain
\begin{equation}\label{equ09}
D_\lambda(L_j)={(\rm\bf
ad}{(\bar{\gamma}+\tilde\gamma)})_\lambda(L_j)+p^{-1}cD_{\lambda}^{p}(L_j),\
\ j\in\mathbb{Z}^+,\ \ p\in\mathbb{Z^-}.
\end{equation}

Hence, from (\ref{equ07}) and (\ref{equ09}), the following theorem
is immediate.
\begin{theo}\label{theo1} Denote by ${\rm
Der}(\mathfrak{B}(p))$ the space of all conformal derivations, ${\rm
Inn}(\mathfrak{B}(p))$ the space of all inner derivations. Let
$D_{\lambda}^{p}$ be as those given in (\ref{out}). We have

\begin{eqnarray*}
{\rm
Der}(\mathfrak{B}(p))=\left\{\begin{array}{lllllllll}
{\rm Inn}(\mathfrak{B}(p))&{\rm if}\ \ p\notin{\mathbb{Z}^-},\vspace{5pt}\\
{\rm
Inn}(\mathfrak{B}(p))\oplus\mathbb{C}D_{\lambda}^{p}&{\rm if}\ \ p\in\mathbb{Z}^-.
\end{array}\right.
\end{eqnarray*}
\end{theo}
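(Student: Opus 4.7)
The plan is to classify an arbitrary conformal derivation $D_\lambda$ by first pinning down its action on the generator $L_0$ and then propagating the answer to all $L_j$ via the $\lambda$-brackets $[L_0{}_\mu L_j]=(p\partial+(j+2p)\mu)L_j$. Writing $D_\lambda(L_0)=\sum_{j\in\mathbb{Z}^+}a_j(\lambda,\partial)L_j$, I would apply $D_\lambda$ to $[L_0{}_\mu L_0]=p(\partial+2\mu)L_0$ and compare coefficients of $L_j$ on the two sides to obtain a polynomial identity in $\lambda,\mu,\partial$ for each $a_j$. Comparing the highest-degree terms in $\partial$ forces $\deg_\partial a_j\le 1$, so $a_j(\lambda,\partial)=c_{j,0}(\lambda)+c_{j,1}(\lambda)\partial$. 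Matching the remaining coefficients then yields the single relation $(j+p)c_{j,0}(\lambda)=(j+2p)\lambda c_{j,1}(\lambda)$, and the whole classification now hinges on whether the coefficient $j+p$ can vanish, i.e.\ on whether $p\in\mathbb{Z}^-$.

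If $p\notin\mathbb{Z}^-$, then $j+p\neq0$ for every $j\in\mathbb{Z}^+$, so $c_{j,0}$ is determined by $c_{j,1}$ and one checks that $\gamma=\sum_{j}\frac{c_{j,1}(-\partial)}{j+p}L_j$ produces an inner derivation $({\rm\bf ad}\,\gamma)_\lambda$ that agrees with $D_\lambda$ on $L_0$. I would then set $\bar D_\lambda=D_\lambda-({\rm\bf ad}\,\gamma)_\lambda$ and, using $\bar D_\lambda(L_0)=0$ together with the derivation rule applied to $[L_0{}_\mu L_j]$, show coefficient-by-coefficient that $\bar D_\lambda(L_j)=0$ for every $j>0$, yielding $D_\lambda\in{\rm Inn}(\mathfrak{B}(p))$.

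If $p\in\mathbb{Z}^-$, the relation is vacuous at $j=-p$ apart from forcing $c_{-p,1}(\lambda)=0$, while leaving $c_{-p,0}(\lambda)=:c(\lambda)$ free. After subtracting the inner derivation built from the $j\neq -p$ coefficients, what remains is $\bar D_\lambda(L_0)=c(\lambda)L_{-p}$. I would absorb the part of $c(\lambda)$ that vanishes at $\lambda=0$ into a further inner derivation $({\rm\bf ad}\,\tilde\gamma)_\lambda$ with $\tilde\gamma$ a scalar multiple of $(c(-\partial)-c(0))(-\partial)^{-1}L_{-p}$ (noting that $g(\lambda)=c(\lambda)-c(0)$ is divisible by $\lambda$, so this expression is in fact polynomial), leaving only a constant $c=c(0)$. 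Repeating the propagation argument on $[L_0{}_\mu L_j]$ forces the remaining derivation to act as $L_j\mapsto cp^{-1}(j+p)L_{-p+j}$, which matches the outer derivation $D_\lambda^p$ of (\ref{out}). Finally I would verify directly from the defining $\lambda$-brackets that $D_\lambda^p$ is a conformal derivation, and that it is not inner, since its output is $\lambda$-independent while any nonzero inner derivation has nontrivial $\lambda$-dependence on some generator.

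The main obstacle I anticipate is the case $p\in\mathbb{Z}^-$, where the recurrence degenerates exactly at the index $j=-p$: one must carefully track the freedom in $c(\lambda)$, justify the reduction to a scalar by an inner derivation (which is only possible because of the divisibility of $c(\lambda)-c(0)$ by $\lambda$), and then verify that the resulting $D_\lambda^p$ is a genuinely non-inner conformal derivation. The $p\notin\mathbb{Z}^-$ case, by contrast, is essentially bookkeeping once the degree bound on $a_j(\lambda,\partial)$ is established.
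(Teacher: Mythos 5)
Your proposal is correct and follows essentially the same route as the paper: bounding $\deg_\partial a_j\le 1$ from the bracket $[L_0{}_\mu L_0]$, extracting the relation $(j+p)c_{j,0}(\lambda)=(j+2p)\lambda c_{j,1}(\lambda)$, splitting on whether $j+p$ can vanish, subtracting inner derivations (including the $\tilde\gamma$ built from $(c(\lambda)-c(0))/\lambda$), and propagating through $[L_0{}_\mu L_j]$ to isolate the outer derivation $D_\lambda^p$ when $p\in\mathbb{Z}^-$. No substantive differences from the paper's argument.
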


\begin{rem}
From Theorem \ref{theo1}, we see that the Heisenberg-Virasoro
conformal algebra $\mathcal{HV}$ has a non-inner conformal
derivation $D_{\lambda}^{p}$ defined as in (\ref{out}).
\end{rem}

 \noindent{\large\bf 4.\ \ Conformal biderivations of  $\mathfrak{B}(p)$
}\setcounter{section}{4}\setcounter{theo}{0}\setcounter{equation}{0}
\vspace{10pt}

In the section, we shall study the conformal biderivations of
$\mathfrak{B}(p)$. First we list some definitions introduced in
\cite{DK2009,ZCY2019}.
\begin{defi}
A $\mathbb{C}$-bilinear map
$\phi_\lambda:\mathfrak{B}(p)\times\mathfrak{B}(p)\rightarrow\mathfrak{B}(p)[\lambda]$
is called a {\it conformal bilinear map} of $\mathfrak{B}(p)$ if
\begin{equation}{\label{R1}} \phi_\lambda(\partial x,y)=-\lambda
\phi_\lambda(x,y),\ \ \phi_{\lambda}(x,\partial
y)=(\partial+\lambda)\phi_\lambda(x,y).
\end{equation}
Furthermore, we call $\phi_{\lambda}$ {\it skew-symmetric} if
\begin{equation}\label{R2}
\phi_\lambda(x,y)=-\phi_{-\partial-\lambda}(y,x).
\end{equation}
\end{defi}
\begin{rem}
Recall Section 2.2 in \cite{DK2009}  that a conformal
$\mathbb{C}$-bilinear is a 2-$\lambda$-bracket on $\mathfrak{B}(p)$
with coefficients in $\mathfrak{B}(p)$, satisfying skew-symmetric.
\end{rem}
\begin{defi}
A conformal $\mathbb{C}$-bilinear map $d_\lambda$ of
$\mathfrak{B}(p)$ is a {\it conformal biderivation} of
$\mathfrak{B}(p)$ if $d_{\lambda}$ is skew-symmetric and satisfies
the relation
\begin{eqnarray*}
d_{\lambda}(x,[y_\mu z])=[(d_\lambda
(x,y))_{\lambda+\mu}z]+[y_\mu(d_\lambda(x,z) )].
\end{eqnarray*}
\end{defi}
One can see that if $d_{\lambda}$ is a conformal biderivation, then
$d_{\lambda}(x,\cdot)$ is a conformal derivation of
$\mathfrak{B}(p)$. From \cite{ZCY2019}, we get that if $d_\lambda$
is a biderivation, then the identity
\begin{equation}\label{bi01}
[(d_\mu(x,y))_{\mu+\gamma}[u_\lambda v]]=[[x_\mu
y]_{\mu+\gamma}d_\lambda(u,v)]
\end{equation}
holds for any $x,y,u,v\in \mathfrak{B}(p)$.

Fix a complex number $c$ and  consider the map
${d^c_\lambda}:R\times R\rightarrow R[\lambda]$ defined by
\begin{equation}\label{bi6}
{d^c_{\lambda}}(x,y)=c[x_\lambda y].
\end{equation}
Then, one can immediately see that $d^c_\lambda$ is a conformal
biderivation. We call the conformal biderivation $d_{\lambda}^{c}$
the  {\it inner conformal biderivation}.

Let $d_\lambda$ be a conformal biderivation of $\mathfrak{B}(p)$,
then one can set
\begin{equation}{\label{N1}}
d_\lambda(L_i,L_j)=\sum_{k\in\mathbb{Z}^+}f_{i,j}^k(\partial,\lambda)L_k
\end{equation}
for some
$f^k_{i,j}(\partial,\lambda)\in\mathbb{C}[\partial,\lambda]$.

In the following parts of this section, we shall describe the
coefficients $f_{i,j}^k(\partial,\lambda)$.

Letting $x=L_i,y=L_j,u=L_m,v=L_n$ in (\ref{bi01}), we have
\begin{eqnarray}\label{bi100}
\begin{array}{ll}
&(\overline{m}(\partial+\mu+\gamma)+(\overline{m}+\overline{n})\lambda)\displaystyle
\sum_{k\in\mathbb{Z}^+} (\overline{k}\partial
+(\overline{m}+\overline{n}+k)(\mu+\gamma))f_{i,j}^k(-\mu-\gamma,\mu)L_{k+m+n}\vspace{8pt}\\
=\!\!\!\!&(\overline{j}\mu-\overline{i}\gamma)\displaystyle\sum_{k\in\mathbb{Z}^+}
(\overline{i+j}\partial+(\overline{i}+\overline{j}+k)(\mu+\gamma))f_{m,n}^k(\partial+\mu+\gamma,\lambda)L_{k+i+j},
\end{array}
\end{eqnarray}
where $\overline{x}=x+p$, $x\in\{i,j,m,n,i+j,k\}$\vspace{6pt}

Letting $m=i,n=j$ and comparing the coefficients of $L_{k+i+j}$, we
have
\begin{eqnarray}\label{bi4}
\begin{array}{ll}
&(\overline{i}(\partial+\mu+\gamma)+(\overline{i}+\overline{j})\lambda)
(\overline{k}\partial+(\overline{i}+\overline{j}+k)(\mu+\gamma))f_{i,j}^k(-\mu-\gamma,\mu)\vspace{8pt}\\
=\!\!\!\!&(\overline{j}\mu-\overline{i}\gamma)
(\overline{i+j}\partial+(\overline{i}+\overline{j}+k)(\mu+\gamma))f_{i,j}^k(\partial+\mu+\gamma,\lambda).
\end{array}
\end{eqnarray}

\begin{lemm}\label{lemma6}
If $p>0$, there exists some $c\in\mathbb{C}$ such that
\begin{equation*}
d_\lambda(L_i,L_j)=d_{\lambda}^c(L_i,L_j)
\end{equation*}
for any $i,j\in\mathbb{Z}^+$.
\end{lemm}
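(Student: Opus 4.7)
The plan is to extract from the constraints (\ref{bi4}) and (\ref{bi01}) that (a) each $f_{i,j}^k(\partial,\lambda)$ vanishes unless $k=i+j$ and is otherwise a scalar multiple of the structure polynomial $(i+p)\partial+(i+j+2p)\lambda$, and (b) the resulting scalar is the same for all pairs $(i,j)$. The whole argument is powered by $p>0$: every quantity of the form $\overline{x}=x+p$ with $x\in\mathbb{Z}^+$ is strictly positive, so none of the polynomial factors appearing below can vanish and all cancellations are legitimate.

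First I would specialize (\ref{bi4}) at $\mu+\gamma=0$. After dividing by $\partial$, the relation collapses to $\overline{k}(\overline{i}\partial+(\overline{i}+\overline{j})\lambda)f_{i,j}^k(0,\mu)=(\overline{i}+\overline{j})\overline{i+j}\,\mu\,f_{i,j}^k(\partial,\lambda)$. Comparing powers of $\mu$ forces $f_{i,j}^k(0,\mu)=c_0\mu$ for some $c_0\in\mathbb{C}$, after which the identity yields $f_{i,j}^k(\partial,\lambda)=c_{i,j}^k\bigl((i+p)\partial+(i+j+2p)\lambda\bigr)$ for some scalar $c_{i,j}^k$.

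Next I would substitute this explicit form back into (\ref{bi4}) in its full generality. Both sides then share the common factors $(\overline{j}\mu-\overline{i}\gamma)$ and $\overline{i}(\partial+\mu+\gamma)+(\overline{i}+\overline{j})\lambda$, each nonzero as a polynomial when $p>0$. Cancelling these reduces (\ref{bi4}) to $c_{i,j}^k(\overline{k}-\overline{i+j})\partial=0$, forcing $c_{i,j}^k=0$ unless $k=i+j$. Writing $c_{i,j}:=c_{i,j}^{i+j}$ gives $d_\lambda(L_i,L_j)=c_{i,j}[{L_i}_\lambda L_j]$.

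Finally, to see that $c_{i,j}$ is independent of $(i,j)$, I would apply (\ref{bi01}) with $x=L_i$, $y=L_j$, $u=L_m$, $v=L_n$: both sides become proportional to the same threefold bracket $[[{L_i}_\mu L_j]_{\mu+\gamma}[{L_m}_\lambda L_n]]$, with coefficients $c_{i,j}$ and $c_{m,n}$ respectively. Expanding via conformal sesquilinearity writes this bracket as the product of $(j+p)\mu-(i+p)\gamma$, $(m+p)(\partial+\mu+\gamma)+(m+n+2p)\lambda$ and $(i+j+p)\partial+(i+j+m+n+2p)(\mu+\gamma)$, times $L_{i+j+m+n}$; under $p>0$ each factor is visibly nonzero, so $c_{i,j}=c_{m,n}$ and their common value $c$ satisfies $d_\lambda=d_\lambda^c$. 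The main technical point I expect to be the cancellations in the third paragraph: if some $\overline{x}$ vanishes, the divided-out factors can become zero polynomials and the forcing $k=i+j$ collapses, which is exactly why the hypothesis $p>0$ is essential here.
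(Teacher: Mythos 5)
Your proof is correct and follows essentially the same route as the paper: both arguments first extract from (\ref{bi4}) that $f_{i,j}^k(\partial,\lambda)$ is a scalar multiple of $\delta_{k,i+j}\bigl(\overline{i}\partial+(\overline{i}+\overline{j})\lambda\bigr)$, and then use the identity (\ref{bi01}) (via (\ref{bi100})) to show all the resulting scalars coincide. The only differences are in execution: you obtain the polynomial form by the specialization $\gamma=-\mu$ where the paper bounds the degree in $\partial$ and compares coefficients of $\partial^2$ and $\gamma$, and you equate $c_{i,j}$ with $c_{m,n}$ for general $m,n$ where the paper takes $m=n=0$ and compares with $c_{0,0}$; both variants are legitimate under the hypothesis $p>0$.
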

\begin{proof}

By considering the power of $\partial$ in (\ref{bi4}), one can set
$f_{i,j}^k(\partial,\lambda)=c_{i,j}^{k,0}(\lambda)+c_{i,j}^{k,1}(\lambda)\partial$.
Using this in (\ref{bi4}) and comparing the coefficients of
$\partial^2$, we have
\begin{equation}{\label{bi3}}
\overline{i}(c_{i,j}^{k,0}(\mu)-(\mu+\gamma)c_{i,j}^{k,1}(\mu))\overline{k}
=(\overline{j}\mu-\overline{i}\gamma)c_{i,j}^{k,1}(\lambda)\overline{i+j}.
\end{equation}
By comparing the coefficients of $\gamma$ in (\ref{bi3}), we have
\begin{equation}\label{bi19}
\overline{k}c_{i,j}^{k,1}(\mu)=\overline{i+j}c_{i,j}^{k,1}(\lambda).
\end{equation}
Then $c_{i,j}^{k,1}(\mu)=c_{i,j}^{k,1}(\lambda)$ for $k=i+j$, which
forces $c_{i,j}^{k,1}(\mu)\in\mathbb{C}$. Thus we can write
$c_{i,j}^{k,1}$ instead of $c_{i,j}^{k,1}(\mu)$. Using this in
(\ref{bi3}), we have
$c_{i,j}^{k,0}(\mu)=(\overline{i})^{-1}(\overline{i}+\overline{j})
c_{i,j}^{k,1}\mu$. If $k\neq i+j$, then from (\ref{bi19}), we have
$c_{i,j}^{k,1}(\mu)=0$. Together this with (\ref{bi3}), we get
$c_{i,j}^{k,0}(\mu)=0$. Hence, we can set
\begin{equation}{\label{bi10}}
f_{i,j}^k(\partial,\mu)=\delta_{k,i+j}c_{i,j}(\overline{i}\partial+(\overline{i}+\overline{j})\mu)
\end{equation}
for any $k\in\mathbb{Z}^+$,  where
$c_{i,j}=(\bar{i})^{-1}c_{i,j}^{k,1}\in\mathbb{C}$.

Letting $m=n=0$ in (\ref{bi100}) and using (\ref{bi10}), one can get
\begin{eqnarray}{\label{bi00}}
(\partial+\mu+\gamma+2\lambda)
(\overline{j}\mu-\overline{i}\gamma)((\overline{i+j}\partial+(\overline{i}+\overline{j})(\mu+\gamma))(c_{i,j}-c_{0,0})=0.
\end{eqnarray}
Taking $\mu=-\gamma$ in (\ref{bi00}), we have
\begin{equation}\label{bi000}
(\overline{i}+\overline{j})\overline{i+j}(c_{i,j}-c_{0,0})(\partial^2+2\partial\lambda)\gamma=0,
\end{equation}
which gives $c_{i,j}=c_{0,0}$ since $p>0$. Thus, one can set
$f_{i,j}^k(\partial,\mu)=\delta_{k,i+j}c(\overline{i}\partial+(\overline{i}+\overline{j})\mu)
$ for any $i,j\in\mathbb{Z^+}$, where $c=c_{0,0}$. Hence, the lemma
follows from (\ref{N1}).
\end{proof}
\begin{lemm}\label{lemma7}
If $p<0$, then there exists some $\bar{c}\in\mathbb{C}$ such that

\begin{eqnarray*}
d_\lambda(L_i,L_j)=(1-\delta_{i+j,-p})d^{\bar{c}}_{\lambda}(L_i,L_j)
\end{eqnarray*}
for any $i,j\in\mathbb{Z}^+$.
\end{lemm}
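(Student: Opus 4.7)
The strategy is to parallel the proof of Lemma~4.3, isolating the extra freedom that appears when $p\in\mathbb{Z}^-$: namely, $\overline{k}=k+p$ vanishes at $k=-p$ and $\overline{i+j}=i+j+p$ vanishes at $i+j=-p$. The opening moves of Lemma~4.3 are unaffected: the degree balance in (\ref{bi4}) still forces $f_{i,j}^{k}(\partial,\lambda)=c_{i,j}^{k,0}(\lambda)+c_{i,j}^{k,1}(\lambda)\partial$, and extracting the $\partial^{2}$- and $\gamma$-coefficients of (\ref{bi4}) again produces (\ref{bi3}) and (\ref{bi19}). In the generic regime $k\neq -p$ and $i+j\neq -p$, these relations deliver the Lemma~4.3 conclusion $f_{i,j}^{k}(\partial,\mu)=\delta_{k,i+j}\,c_{i,j}(\overline{i}\partial+(\overline{i}+\overline{j})\mu)$ with $c_{i,j}\in\mathbb{C}$. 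So all new work is confined to the anomalous triples where $k=-p$ or $i+j=-p$: at such values (\ref{bi19}) still kills most of $c_{i,j}^{k,1}$, but the constant-in-$\partial$ parts $c_{i,j}^{-p,0}(\mu)$ and the entire diagonal $i+j=-p$ escape (\ref{bi3})--(\ref{bi19}).

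To annihilate these residual degrees of freedom I plan to return to the master identity (\ref{bi100}). Following the template of the last step of Lemma~4.3, I specialize $m=n=0$; the coefficient of $L_{k'}$ for $k'<i+j$ then reads $(\overline{k'}\partial+(2p+k')(\mu+\gamma))f_{i,j}^{k'}(-\mu-\gamma,\mu)=0$. Even at the anomalous value $k'=-p$, the surviving factor $p(\mu+\gamma)$ is nonzero, so $c_{i,j}^{-p,0}=c_{i,j}^{-p,1}=0$ whenever $-p<i+j$. For the complementary range $i+j\leq -p$ I intend to use a second specialization of (\ref{bi100}) --- for instance $(m,n)=(-p,0)$, or equivalently the skew-symmetry (\ref{R2}) to transpose to already-handled index pairs --- to close the remaining anomalous constants and in particular annihilate $f_{i,j}^{k}$ identically on the diagonal $i+j=-p$.

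Once all nonvanishing $f_{i,j}^{k}$ have been reduced to $\delta_{k,i+j}(1-\delta_{i+j,-p})c_{i,j}(\overline{i}\partial+(\overline{i}+\overline{j})\mu)$, uniformity of the constants $c_{i,j}$ follows as in Lemma~4.3: plugging back into (\ref{bi100}) with $m=n=0$ yields the analogue of (\ref{bi00})--(\ref{bi000}), namely $(\overline{i}+\overline{j})\overline{i+j}(c_{i,j}-c_{0,0})(\partial^{2}+2\partial\lambda)\gamma=0$, forcing $c_{i,j}=c_{0,0}=:\bar{c}$ whenever $\overline{i}+\overline{j}\neq 0$ and $\overline{i+j}\neq 0$; the residual borderline pairs (notably $i+j=-2p$) are settled by further specializations of (\ref{bi100}). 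The main obstacle throughout is the middle step: because the vanishing of $\overline{k}$ or $\overline{i+j}$ strips entire coefficients out of (\ref{bi3})--(\ref{bi19}), the positivity argument of Lemma~4.3 provides no control there, so genuine additional input from (\ref{bi100}) --- with careful bookkeeping of the small-index cases --- is what drives the remaining anomalous coefficients, and the entire $i+j=-p$ diagonal, to zero.
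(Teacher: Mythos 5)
Your off-diagonal reduction is mostly sound, and the $m=n=0$ specialization of (\ref{bi100}) is in places cleaner than the paper's treatment: for $k'<i+j$ the right-hand side contributes nothing to $L_{k'}$, so $(\overline{k'}\partial+(2p+k')(\mu+\gamma))f_{i,j}^{k'}(-\mu-\gamma,\mu)=0$ does kill $f_{i,j}^{k'}$, including the anomalous $k'=-p$ when $-p<i+j$. Two smaller problems before the main one: the degree bound you import from the generic case is wrong on the diagonal --- when $\overline{i+j}=0$ the right-hand side of (\ref{bi4}) loses a power of $\partial$ and only bounds $\deg_{\partial}f_{i,j}^{k}$ by $2$, which is why the paper's case (2) must work with $e^{k,0}+e^{k,1}\partial+e^{k,2}\partial^{2}$; and the case $\overline{i}=0$, where the $\partial^{2}$-identity (\ref{bi3}) degenerates on both sides, is only gestured at via skew-symmetry and still leaves the pair $(L_{-p},L_{0})$ (for which the transposed pair again lands on the diagonal) untreated. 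These are repairable, but the steps are announced rather than carried out.

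The genuine gap is precisely the step you defer: annihilating $f_{i,j}^{i+j}$ on the diagonal $i+j=-p$. If you actually extract the coefficient of $L_{i+j}$ from (\ref{bi100}) with $m=n=0$, the right-hand side contributes its $k=0$ term $(\overline{j}\mu-\overline{i}\gamma)(\overline{i}+\overline{j})(\mu+\gamma)f_{0,0}^{0}(\partial+\mu+\gamma,\lambda)$, which does not vanish when $c_{0,0}\neq0$ (note $\overline{i}+\overline{j}=p\neq0$ there even though $\overline{i+j}=0$). After cancelling the common nonzero factors, the identity pins $f_{i,j}^{i+j}(\partial,\mu)$ to the \emph{inner} value $c_{0,0}(\overline{i}\partial+(\overline{i}+\overline{j})\mu)$, not to $0$. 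No second specialization, and no appeal to (\ref{R2}), can do better: the map $d^{\bar c}_{\lambda}$ itself satisfies every hypothesis and has $d^{\bar c}_{\lambda}(L_{0},L_{-p})=\bar c\,p(\partial+\lambda)L_{-p}\neq0$, so nothing can force the diagonal to vanish. Be aware that the paper's own case (2) arrives at (\ref{N3}) only by omitting exactly this right-hand contribution from the $L_{-p}$-coefficient, so it is not a safe model to imitate; the factor $(1-\delta_{i+j,-p})$ in the statement sits uneasily with the observation, made just after (\ref{bi6}), that $d^{c}_{\lambda}$ is always a conformal biderivation. As written, your plan cannot be completed to a proof of the stated formula.
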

\begin{proof}
We consider the identities (\ref{bi4}) from four cases.\vspace{6pt}

(1) $\overline{i}\neq0$, $\overline{i+j}\neq0$. If
$\overline{k}\neq0$, then considering the coefficients of $\partial$
in (\ref{bi4}) and using the same argument as in Lemma \ref{lemma6},
we can obtain the equations (\ref{bi10})--(\ref{bi000}). For
$\overline{i}+\overline{j}\neq0$, one can get $c_{i,j}=c_{0,0}$ from
(\ref{bi000}). For $\overline{i}+\overline{j}=0$, by taking $\mu=0$
in (\ref{bi00}), we have
$\overline{i}(\partial+\gamma+2\lambda)\overline{i+j}(c_{i,j}-c_{0,0})\gamma\partial=0$,
which forces $c_{i,j}=c_{0,0}$. Thus
$d_\lambda(L_i,L_j)=d_{\lambda}^{\bar{c}}(L_i,L_j)$, where
$\bar{c}=c_{0,0}$. If $\overline{k}=0$, then considering the power
of $\partial$ in (\ref{bi4}), one can set
$f_{i,j}^{-p}(\partial,\lambda)=c^{-p}_{i,j}(\lambda)$ for some
$c^{-p}_{i,j}(\lambda)\in\mathbb{C}[\lambda]$. Using this in
(\ref{bi4}) and comparing the coefficients of $\gamma^2$, we have
$\overline{i}(c^{-p}_{i,j}(\mu)+c^{-p}_{i,j}(\lambda))\overline{i+j}=0$,
which forces $c^{-p}_{i,j}(\lambda)=0$ since
$\overline{i},\overline{i+j}\neq0$. This gives
$f_{i,j}^{-p}(\partial,\lambda)=0$. Thus, in this case we have
\begin{equation}\label{N2}
d_\lambda(L_i,L_j)=d_{\lambda}^{\bar{c}}(L_i,L_j),\ i\neq-p,\
i+j\neq-p.
\end{equation}
for some $\bar{c}\in\mathbb{C}$ and any
$i,j\in\mathbb{Z}^+$.\vspace{6pt}

(2) $\overline{i}\neq0$, $\overline{i+j}=0$. In this case, if
$\overline{k}\neq0$, then from (\ref{bi4}), one can set
$f_{i,j}^{k}(\partial,\lambda)=e^{k,0}_{i,j}(\lambda)+e^{k,1}_{i,j}(\lambda)\partial+e^{k,2}_{i,j}(\lambda)\partial^2$.
Using this in (\ref{bi4}), we can get
\begin{eqnarray}\label{bi11}
\begin{array}{lll}
&(\overline{i}\bar{\partial}+p\lambda)\bar{\partial}(e^{k,0}_{i,j}(\mu)+e^{k,1}_{i,j}(\mu)(-\mu-\gamma)+e^{k,2}_{i,j}(\mu)(-\mu-\gamma)^2)\vspace{8pt}\\
=\!\!\!\!&(\overline{j}\mu-\overline{i}\gamma)(\mu+\gamma)
(e^{k,0}_{i,j}(\lambda)+e^{k,1}_{i,j}(\lambda)\bar{\partial}+e^{k,2}_{i,j}(\lambda)\bar{\partial}^2)
\end{array}
\end{eqnarray}
where $\bar{\partial}=\partial+\mu+\gamma$. Then comparing the
coefficients of $\gamma^4$ in the above equation, we can get
$\bar{i}\bar{k}(e^{k,2}_{i,j}(\mu)+e^{k,2}_{i,j}(\lambda)=0$, which
forces $e^{k,2}_{i,j}(\lambda)=0$ since $\bar{i}\bar{k}\neq0$. Using
this  and comparing the coefficients of $\gamma^3$ in (\ref{bi11}),
we can set $e^{k,1}_{i,j}(\mu)=e^{k,1}_{i,j}\in\mathbb{C}$,
substituting into (\ref{bi11}) and comparing the coefficients of
$\partial^2$, one can deduce $e^{k,1}_{i,j}=e^{k,0}_{i,j}(\mu)=0$.
Thus we have
$f_{i,j}^{k}(\partial,\lambda)=\delta_{k,-p}f_{i,j}^{-p}(\partial,\lambda)$.
By using this in (\ref{bi100}) and take $m=n=0$, we can get
$(\partial+\mu+\gamma+2\lambda)
f_{i,j}^{-p}(-\mu-\gamma,\mu)(\mu+\gamma)=0$, which forces
$f_{i,j}^{-p}(\partial,\mu)=0$. Thus,
$f_{i,j}^{k}(\partial,\lambda)=0$ for any $i,j,k\in\mathbb{Z}^+$,
namely,
\begin{equation}\label{N3}
d_{\lambda}(L_i,L_j)=0,\ i\neq-p,\ i+j=-p.
\end{equation}

(3)  $\overline{i}=0$, $\overline{j}\neq0$. In this case, letting
$\mu=-\gamma$ in (\ref{bi4}) and using $\overline{j}\neq0$, one can
obtain
\begin{eqnarray}\label{bi77}
j\mu f_{-p,j}^k(\partial,\lambda)=\overline{k}\lambda
f_{-p,j}^k(0,\mu).
\end{eqnarray}

Assume that $j\neq0$. If $\overline{k}=0$, then
$f_{-p,j}^{-p}(\partial,\lambda)=0$ from (\ref{bi77}). If
$\overline{k}\neq0$, then from (\ref{bi77}), we can safely set
$f_{-p,j}^k(\partial,\lambda)=\delta_{k,-p+j}d_{-p,j}\lambda$ for
some $d_{-p,j}\in\mathbb{C}$. This together with taking $i=m=-p$ in
(\ref{bi100}), then considering the coefficients of $\partial$,
 we have $j((n+p)d_{-p,j}-(j+p)d_{-p,n})\lambda\mu=0$
for any $0\neq n\in\mathbb{Z}^+$, which gives
$d_{-p,j}=\tilde{c}(j+p)$ for some $\tilde{c}\in\mathbb{C}$ since
$j\neq0$. Hence,
\begin{equation}\label{N10}
f_{-p,j}^k(\partial,\lambda)=\delta_{k,-p+j}\tilde{c}(j+p)\lambda.
\end{equation}
Now, by taking $i=-p$, $m=-p+1$ in (\ref{bi100}) and using
(\ref{bi10}) and (\ref{N10}), then comparing the coefficients of
$\partial^2$, we have $j(j+p)(\bar{c}-\tilde{c})\mu=0$ for any
$0\neq j\in\mathbb{Z}^+$, which forces $\tilde{c}=\bar{c}$.  Now
suppose that $j=0$. Since  $d_\lambda(L_{0},L_{-p})=0$ by
(\ref{N3}), we have $d_\lambda(L_{-p},L_{0})=0$. Thus, in this case
we have
\begin{equation}\label{N5}
d_{\lambda}(L_{-p},L_j)=(1-\delta_{-p+j,-p})d^{\bar{c}}_{\lambda}(L_{-p},L_j),
\end{equation}
for any $j\in\mathbb{Z}^+$, where $\bar{c}$ is defined in
(\ref{N2}).

 (4)
$\overline{i}=0$, $\overline{j}=0$. By using
$[{L_{-p}}_{\lambda}{L_{-p}}]=0$ in (\ref{bi01}), we get
$d_{\lambda}(L_{-p},L_{-p})$ is in the center of
$[\mathfrak{B}(p)_{\lambda}\mathfrak{B}(p)]$. Thus we have
\begin{equation}\label{N6}
d_{\lambda}(L_{-p},L_{-p})=0.
\end{equation}
Hence, the lemma holds from (\ref{N2}), (\ref{N3}), (\ref{N5}) and
(\ref{N6}).
\end{proof}

The following theorem follows immediately from Lemmas \ref{lemma6}
and \ref{lemma7}.
\begin{theo}\label{theo2}
Let $d_\lambda$ be a conformal biderivation of $\mathfrak{B}(p)$
with $0\neq p\in\mathbb{C}$. We have

{\rm(1)} If $p\notin{\mathbb{Z}^-}$, then $d_{\lambda}$ is an inner
conformal biderivation.

{\rm(2)} If $p\in\mathbb{Z}^-$, then, for any $i,j\in\mathbb{Z}^+$,
there exists $c\in\mathbb{C}$ such that
\begin{eqnarray*}
d_\lambda(L_i,L_j)=(1-\delta_{i+j,-p})d^c_{\lambda}(L_i,L_j).
\end{eqnarray*}
where $d_{\lambda}^c$ is defined in (\ref{bi6}). Note that
$p^{-1}L_0$ generates the Virasoro conformal algebra. Thus, every
conformal biderivation on the Virasoro Lie conformal algebra is
inner. Besides, if ${ d}_{\lambda}$ is a conformal biderivation of
$\mathcal{HV}$, then
\begin{equation*}
{ d}_{\lambda}(L_0,L_0)= {c}[{L_0}_\lambda L_0],\ {
d}_{\lambda}(L_0,L_{-p})={  d}_{\lambda}(L_{-p},L_{-p})=0.
\end{equation*}

\end{theo}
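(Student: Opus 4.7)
The plan is to read off parts (1) and (2) directly from Lemmas~\ref{lemma6} and~\ref{lemma7}, and then deduce the Virasoro and Heisenberg--Virasoro statements by specializing the formulas already established in those lemmas.

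For part (1), I would invoke Lemma~\ref{lemma6}: under the hypothesis $p\notin\mathbb{Z}^-$, the factors $\overline{i}=i+p$ and $\overline{i+j}=i+j+p$ never vanish for $i,j\in\mathbb{Z}^+$, so the chain of identifications producing $f^k_{i,j}(\partial,\mu)=\delta_{k,i+j}c(\overline{i}\partial+(\overline{i}+\overline{j})\mu)$ with a single global constant $c$ goes through, giving $d_\lambda=d^c_\lambda$. For part (2), Lemma~\ref{lemma7} delivers exactly the asserted formula: the factor $(1-\delta_{i+j,-p})$ records precisely the vanishing obtained in (\ref{N3}), (\ref{N5}) and (\ref{N6}), while on the open stratum $i+j\neq -p$ the expressions in~(\ref{N2}) and~(\ref{N5}) all collapse to a common $d^{\bar c}_\lambda(L_i,L_j)$, the constant $\bar c$ being the same in every case thanks to the compatibility forced by~(\ref{bi100}).

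For the Virasoro subalgebra $Vir=\mathbb{C}[\partial](p^{-1}L_0)$, any conformal biderivation is determined by its value on $(L_0,L_0)$; setting $i=j=0$ in the formula of part~(1) (or of part~(2) when $p\in\mathbb{Z}^-$, noting that $0\neq-p$), one has $d_\lambda(L_0,L_0)=c[L_0{}_\lambda L_0]$, which is inner. For $\mathcal{HV}=\mathbb{C}[\partial]\{p^{-1}L_0,L_{-p}\}$ with $p\in\mathbb{Z}^-$, I would rerun the four-case analysis of Lemma~\ref{lemma7} with all indices confined to $\{0,-p\}$: the pair $(0,0)$ has $i+j=0\neq -p$, so $d_\lambda(L_0,L_0)=c[L_0{}_\lambda L_0]$; the pair $(0,-p)$ has $i+j=-p$, so~(\ref{N3}) (equivalently~(\ref{N5}) via skew-symmetry) forces $d_\lambda(L_0,L_{-p})=0$; and the pair $(-p,-p)$ is covered by~(\ref{N6}), giving $d_\lambda(L_{-p},L_{-p})=0$.

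The only genuinely subtle point is that a conformal biderivation defined \emph{intrinsically} on $\mathcal{HV}$ could a priori take values outside what a restriction from $\mathfrak{B}(p)$ would produce, so one should verify that the arguments of Lemma~\ref{lemma7} only use brackets internal to $\mathcal{HV}$ when the indices lie in $\{0,-p\}$. This is the case: Case~(4) uses only $[L_{-p}{}_\lambda L_{-p}]=0$ together with~(\ref{bi01}), and the portion of Case~(3) relevant to $(0,-p)$ needs only identities among $L_0, L_{-p}$, and the images $d_\lambda(L_{-p},L_j)$ with $j=0$. Thus the $\mathcal{HV}$ statement follows by the same bookkeeping, and no new computation is required beyond what is already in the two lemmas.
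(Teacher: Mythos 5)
Your proposal is correct and takes essentially the same route as the paper, which derives Theorem~\ref{theo2} directly as an immediate consequence of Lemmas~\ref{lemma6} and~\ref{lemma7} and treats the $Vir$ and $\mathcal{HV}$ statements as specializations of the formulas in those lemmas. Your extra check that the $\mathcal{HV}$ case only uses brackets internal to $\{L_0,L_{-p}\}$ is a sensible refinement the paper leaves implicit, but it does not change the argument.
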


 \vspace{6pt}
 \noindent{\large\bf 5.\ \ Second Cohomologies of $\mathfrak{B}(p)$
}\setcounter{section}{5}\setcounter{theo}{0}\setcounter{equation}{0}
\vspace{10pt}

 First, let us  recall some  definitions from \cite{DK2009}.

\begin{defi}Let $A$ and $M$ be a $\mathbb{C}[\partial]$-modules, and denote by
$\partial^M$ the action $\partial$ on $M$. A $2$-$\lambda$-bracket
on $A$ with coefficients in $M$ is a $\mathbb{C}$-linear map
\begin{eqnarray*}
c:\ A\otimes A&\rightarrow&
\mathbb{C}[\lambda]\otimes M,\\
a\otimes b&\mapsto&\{{a}_\lambda b\}_{c},
\end{eqnarray*}
satisfying the following conditions:
\begin{eqnarray}
\label{D1}&\{\partial a_{\lambda}b\}_{c}=-\lambda\{a_{\lambda}
b\}_{c}, \ \ \{a_{\lambda}\partial
b\}_{c}=(\lambda+\partial^M)\{a_{\lambda}b\}_{c}\ \ {\rm(sesquilinearity)},&\\
\label{D2}&\{b_{\lambda} a\}_{c}=-\{a_{-\lambda-\partial^M}b\}_{c}\
\ ({\rm skew\!-\!\rm symmetry}).&
\end{eqnarray}
\end{defi}
Denote by $C^2(A,M)$ the space of all $2$-$\lambda$-brackets on $A$
with coefficients in $M$. Recall from \cite{DK2009} that there is a
differential $d$ on the space $C^2(A,M)$, and the equation $dc=0$
can be written as follows:
\begin{eqnarray}\label{D3}
\begin{array}{lll}  &a_{\lambda}\{b_{\mu}z\}_{c}-b_{\mu}\{a_\lambda
z\}_{c}+z_{-\lambda-\mu-\partial^M}\{a_{\lambda}b\}_{c}\vspace{8pt}\\
 &+\,\{a_\lambda[b_\mu z]\}_{c}-\{b_\mu[a_\lambda
z]\}_{c}+\{z_{-\lambda-\mu-\partial^M}[a_\lambda b]\}_{c}=0,
\end{array}
\end{eqnarray}
for every $a,b,z\in A$.

Let $\phi:A\rightarrow M$ be a $\mathbb{C}[\partial]$-module
homomorphism. Then it follows from \cite{DK2009} that exact elements
$c=d\phi$ are of the form
\begin{eqnarray}{\label{dphi}}
\{a_\lambda
b\}_{d\phi}=a_\lambda\phi(b)-b_{-\lambda-\partial^M}\phi(a)-\phi([a_\lambda
b]).
\end{eqnarray}

 Hence, the second cohomology can be defined as
follows.
\begin{eqnarray}
H^2(A,M)=\{c:A\otimes A\rightarrow \mathbb{C}[\lambda]\otimes
M|(\ref{D1}){\rm-}(\ref{D3})\ {\rm hold}\}/\{c\ {\rm of\ the\ form\
}(\ref{dphi})\}.
\end{eqnarray}
We say that the elements of $H^2(A,M)$ are {\it $2$-cocycles}, and
call $c\in H^2(A,M)$ is {\it trivial} if there exists some $d\phi$
defined in (\ref{dphi}) such that $c=d\phi$.
\begin{defi}A module over a Lie conformal algebra $A$ is a
$\mathbb{C}[\partial]$-module $M$, endowed with a $\lambda$-{\it
action}, that is a $\mathbb{C}$-linear map $A\otimes
M\rightarrow\mathbb{C}[\lambda]\otimes M$, denoted by $a\otimes
m\mapsto a_\lambda m$, such that for any $a,b\in A$, $m\in M$,
\begin{eqnarray*}
&(\partial a)_\lambda m=-\lambda a_\lambda m,\ \ a_\lambda(\partial
m)=(\partial+\lambda)a_\lambda m,&\\
&a_\lambda(b_\mu m)-b_\mu(a_\lambda m)=[a_\lambda
b]_{\lambda+\mu}m.&
\end{eqnarray*}
\end{defi}

Let $M$ be a nontrivial free conformal module of rank one over
$\mathfrak{B}(p)$. One can see that the one-dimensional vector space
$\mathbb{C}$ can be regarded as a module (called a trivial module)
over $\mathfrak{B}(p)$ with the action of $\partial$,$L_i$ being
zero. Recall from \cite{SXY2018} that there are some
$\Delta,\alpha,\beta \in\mathbb{C}$ such that

(1) If $p\neq-1$, then $M\cong M_{\Delta,\alpha}$, where
\begin{eqnarray*}
M_{\Delta,\alpha}=\mathbb{C}[\partial]v,\ \ {L_0}_\lambda
v=p(\partial+\Delta\lambda+\alpha)v,\ \ {L_i}_\lambda v=0,\ i\geq1.
\end{eqnarray*}

(2) If $p=-1$, then $M\cong M_{\Delta,\alpha,\beta}$, where
\begin{eqnarray*}
M_{\Delta,\alpha,\beta}=\mathbb{C}[\partial]w,\ \ {L_0}_\lambda
w=-(\partial+\Delta\lambda+\alpha)w,\ \ {L_1}_\lambda w=\beta w,\ \
{L_i}_\lambda w=0,\ i\geq2.
\end{eqnarray*}
Furthermore, the module $M_{\Delta,\alpha}$ (resp.,
$M_{\Delta,\alpha,\beta}$) is irreducible if and only if
$\Delta\neq0$ (resp., $\Delta\neq0$ or $\beta\neq0$).

In the following sections, we shall study the second cohomologies
$H^2(\mathfrak{B}(p),\mathbb{C})$, $H^2(\mathfrak{B}(p),M_{\Delta,
{\alpha}})$ with $p\neq-1$ and $H^2(\mathfrak{B}(-1),M_{\Delta,
{\alpha},\beta})$ for $\alpha\neq0$, respectively.\vspace{6pt}

\noindent{{\it 5.1.\ \ Second Cohomology of $\mathfrak{B}(p)$ with
trivial coefficients}}\vspace{6pt}

In this section, we shall compute the second cohomology
$H^2(\mathfrak{B}(p),\mathbb{C})$, where $\partial$, $L_i$
$(i\in\mathbb{Z}^+)$ act by zero on $\mathbb{C}$.

Assume that $c\in C^2(\mathfrak{B}(p),\mathbb{C})$ be a
$2$-$\lambda$-bracket. Since $\partial$ acts by zero on
$\mathbb{C}$, we can write (\ref{D3}) and (\ref{dphi}) as follows:

\begin{eqnarray}
\label{DD3}&\{a_\lambda[b_\mu z]\}_{c}-\{b_\mu[a_\lambda
z]\}_{c}+\{z\,_{-\lambda-\mu}[a_\lambda b]\}_{c}=0,&\\
\label{dd4}&\{a_\lambda b\}_{d\phi}=-\phi([a_\lambda b]).&
\end{eqnarray}
Then, replacing $a,b,c$ by $L_i,L_j,L_k$ in (\ref{DD3}),
respectively, we have
\begin{eqnarray*}
\{{L_i}\,_\lambda[{L_j}\,_\mu L_k]\}_{c}-\{{L_j}\,_\mu[{L_i}\,_\lambda
L_k]\}_{c}+\{{L_k}\,_{-\lambda-\mu}[{L_i}\,_\lambda L_{j}]\}_{c}=0,
\end{eqnarray*}
which gives
\begin{eqnarray}{\label{Main1}}
\begin{array}{lll}
&((j+p)\lambda+(j+k+2p)\mu)\{{L_{i}}\,_\lambda{L_{j+k}}\}_{c}
-((i+p)\mu+(i+k+2p)\lambda)\{{L_{j}}\,_\mu{L_{i+k}}\}_{c}\vspace{8pt}\\
&=((j+p)\lambda-(i+p)\mu)\{{L_{i+j}}\,_{\lambda+\mu}{L_{k}}\}_{c}.
\end{array}
\end{eqnarray}

\begin{lemm}{\label{lemma1}}
If $p>0$, then replacing $c$ with $c+d\phi$, we can get
\begin{equation*}
\{{L_j}\,_{\lambda}L_k\}_{c}=\delta_{j+k,0}a\lambda^{3},\
j,k\in\mathbb{Z}^+,
\end{equation*}
where $a\in\mathbb{C}$ and $d\phi$ is as in (\ref{dd4}).
\end{lemm}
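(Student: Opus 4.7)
The plan is to reduce the computation of every $\{L_j{}_\lambda L_k\}_c$ to determining the ``row'' polynomials $\{L_0{}_\lambda L_n\}_c$, and then to normalize the latter using a coboundary. I set $f_{j,k}(\lambda):=\{L_j{}_\lambda L_k\}_c\in\mathbb{C}[\lambda]$; these are polynomials in $\lambda$ alone because $\partial$ acts by zero on $\mathbb{C}$. Any $\mathbb{C}[\partial]$-linear map $\phi\colon\mathfrak{B}(p)\to\mathbb{C}$ annihilates $\partial\mathfrak{B}(p)$, so it is determined by the scalars $\phi_i:=\phi(L_i)$, and (\ref{dd4}) gives $\{L_i{}_\lambda L_j\}_{d\phi}=-(i+j+2p)\lambda\phi_{i+j}$; hence a coboundary shifts $f_{i,j}$ only by a scalar multiple of $\lambda$ depending only on $i+j$.

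To reduce to the row $j=0$, I specialize (\ref{Main1}) to $i=0$ and then take $\mu=0$, yielding $(j+p)\lambda f_{j,k}(\lambda)=(j+p)\lambda f_{0,j+k}(\lambda)-(k+2p)\lambda f_{j,k}(0)$. Since $p>0$ forces $j+p\neq 0$, I rearrange to $f_{j,k}(\lambda)=f_{0,j+k}(\lambda)-\tfrac{k+2p}{j+p}f_{j,k}(0)$. Applying this with $j=0$ and using $k+2p\neq 0$ gives $f_{0,k}(0)=0$ for all $k$; evaluating the general identity at $\lambda=0$ then forces $\tfrac{j+k+3p}{j+p}f_{j,k}(0)=0$, and hence $f_{j,k}(0)=0$ since $j+k+3p>0$. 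Thus $f_{j,k}(\lambda)=f_{0,j+k}(\lambda)$ for every $j,k\in\mathbb{Z}^+$.

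Next I determine $f_{0,n}(\lambda)$ by setting $i=j=0$ and $k=n$ in (\ref{Main1}), producing the Virasoro-type cocycle equation
\begin{equation*}
(p\lambda+(n+2p)\mu)f_{0,n}(\lambda)-(p\mu+(n+2p)\lambda)f_{0,n}(\mu)=p(\lambda-\mu)f_{0,n}(\lambda+\mu).
\end{equation*}
Expanding $f_{0,n}(\lambda)=\sum_{i\geq 0}c_i\lambda^i$ and extracting the coefficient of $\lambda^i\mu$ for $i\geq 2$ yields $(n+3p-pi)c_i=0$, while the coefficient of $\lambda^2\mu^{i-1}$ for $i\geq 3$ yields $i(3-i)c_i=0$; together with $c_0=0$, these force $c_i=0$ for $i\geq 2$ except possibly $c_3$, and $c_3=0$ unless $n=0$. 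Hence $f_{0,n}(\lambda)=c_1^{(n)}\lambda+\delta_{n,0}\,a\lambda^3$. Choosing $\phi_i:=c_1^{(i)}/(i+2p)$ (well-defined because $p>0$ ensures $i+2p\neq 0$) and replacing $c$ by $c-d\phi$ annihilates the linear term for every $n$, leaving $f_{0,n}(\lambda)=\delta_{n,0}\,a\lambda^3$. The reduction step remains valid for the modified cocycle, so $f_{j,k}(\lambda)=\delta_{j+k,0}\,a\lambda^3$, as claimed.

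The main technical step is the coefficient comparison for the Virasoro-type equation, which is routine but requires careful bookkeeping of the binomial expansion of $(\lambda-\mu)(\lambda+\mu)^n$. The positivity hypothesis $p>0$ is used essentially three times: to invert $j+p$ in the reduction, to invert $i+2p$ in the coboundary normalization, and to ensure $j+k+3p\neq 0$ when forcing $f_{j,k}(0)=0$.
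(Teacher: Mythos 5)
Your proof is correct, and it is organized quite differently from the paper's. The paper first normalizes by the coboundary (killing $\frac{d}{d\lambda}\{{L_0}_\lambda L_i\}_c|_{\lambda=0}$), then differentiates (\ref{Main1}) with $i=0$ at $\lambda=0$ to obtain the constraint $(j+k+(3-l)p)h_l(j,k)=0$ on every coefficient $h_l(j,k)$ of every bracket $\{{L_j}_\mu L_k\}_c$; this leaves a residual exceptional case $j+k=(l_0-3)p$ with $l_0\geq 4$, which the paper must then eliminate by two further coefficient comparisons in (\ref{Main1}). You instead prove at the outset the structural identity $f_{j,k}(\lambda)=f_{0,j+k}(\lambda)$ (by setting $i=0$, $\mu=0$ in (\ref{Main1}) and inverting $j+p$, $k+2p$, $j+k+3p$, all nonzero since $p>0$), which collapses the whole problem to the one-variable Virasoro-type functional equation (\ref{main5}) for the row $f_{0,n}$; there the two independent coefficient extractions $(n+(3-i)p)c_i=0$ and $i(3-i)c_i=0$ kill everything above degree $1$ except $c_3\delta_{n,0}$ with no exceptional case to chase, and the coboundary is applied last to remove the linear term. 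Your route is arguably cleaner and avoids the paper's case analysis; the paper's buys nothing extra here. One immaterial slip: with your choice $\phi_i=c_1^{(i)}/(i+2p)$ and the convention $\{{L_0}_\lambda L_n\}_{d\phi}=-(n+2p)\lambda\phi_n$, the linear term is cancelled by passing to $c+d\phi$, not $c-d\phi$ (equivalently, replace $\phi$ by $-\phi$); this matches the statement of the lemma and does not affect the argument.
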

\begin{proof}
Define a $\mathbb{C}[\partial]$-module homomorphism
$\phi:\mathfrak{B}(p) \rightarrow\mathbb{C}$ by the formula
\begin{eqnarray*}
\phi(L_i)=(i+2p)^{-1}\frac{d}{d\lambda}\{{L_0}\,_{\lambda}L_i\}_{c}|_{\lambda=0},\
i\in\mathbb{Z}^+.
\end{eqnarray*}
Since
$\{{L_{0}}\,_{\lambda}L_{i}\}_{d\phi}=-\lambda(\frac{d}{d\,_{\lambda}}\{{L_{0}}\,_{\lambda}L_{i}\}_{c}|_{\lambda=0})$
by (\ref{dd4}), replacing $c$ with  $c+d\phi$, we have
\begin{equation}\label{main2}
\frac{d}{d\lambda}\{{L_0}\,_{\lambda}L_i\}_{\bar{c}}|_{\lambda=0}=0,\
i\in\mathbb{Z}^+.
\end{equation}

Letting $i=0$ and taking the derivative  of both sides of
(\ref{Main1}) with respect to $\lambda$, we get
\begin{eqnarray}{\label{main3}}
\begin{array}{lll}
&(j+p)\{{L_{0}}\,_\lambda{L_{j+k}}\}_{{c}}
+((j+p)\lambda+(j+k+2p)\mu)\frac{d}{d\lambda}\{{L_{0}}\,_\lambda{L_{j+k}}\}_{ {c}}-(k+2p)\{{L_{j}}\,_\mu{L_{k}}\}_{ {c}}\vspace{8pt}\\
&=(j+p)\{{L_{j}}\,_{\lambda+\mu}{L_{k}}\}_{
{c}}+((j+p)\lambda-p\mu)\frac{\partial}{\partial\lambda}\{{L_{j}}\,_{\lambda+\mu}{L_{k}}\}_{
{c}}.
\end{array}
\end{eqnarray}
Note that $\{{{L_0}}\,_{0}L_k\}_{{d\phi}}=0$. Thus, by letting
$i=j=\lambda=0$ in (\ref{Main1}) and using $p>0$, one can deduce
$\{{L_0}\,_{0}L_k\}_c=0$ for all $k\in\mathbb{Z}^+$. Thus, taking
$\lambda=0$ in (\ref{main3}) and using (\ref{main2}), we have
\begin{eqnarray}\label{jia4}
(j+k+3p)\{{L_{j}}\,_\mu{L_{k}}\}_{{c}}
=p\mu\frac{\partial}{\partial\lambda}\{{L_{j}}\,_{\lambda+\mu}{L_{k}}\}_{{c}}|_{\lambda=0},\
j,k\in\mathbb{Z}^+.
\end{eqnarray}
Set
$\{{L_{j}}\,_\mu{L_{k}}\}_{{c}}=\sum_{l=0}^{m}h_l(j,k)\mu^l\in\mathbb{C[\mu]}$.
Assume that $m\geq3$. Then from the above equation, we can get
\begin{equation}{\label{jia3}}
(j+k+3p)h_0(j,k)+\sum_{l=1}^m(j+k+(3-l)p)h_l(j,k)\mu^l=0,\
j,k\in\mathbb{Z}^+,
\end{equation}
which gives
\begin{eqnarray}
\label{jia6}&h_0(j,k)=h_1(j,k)=h_2(j,k)=0,&\\
\label{jia7}&(j+k+(3-l)p)h_l(j,k)=0,\ \ 3\leq l\leq m.&
\end{eqnarray}
Fix $j$ and $k$. If $j+k=0$, then (\ref{jia6}) and (\ref{jia7})
imply that $\{{L_0}\,_{\lambda}L_0\}_c=h_{3}(0,0)\mu^{3}$. Now, let
$j+k\neq0$. Assume that $(l_0-3)p=j+k$ for some $4\leq l_0\leq m$.
Then, it follows from (\ref{jia6}) and (\ref{jia7}) that
$\{{L_j}\,_{\lambda}L_k\}_c=\delta_{j+k,(l_0-3)p}h_{l_0}(j,k)\mu^{l_0}$.
Then, letting $i=0$ and $j+k=(l_0-3)p$ in (\ref{Main1}), we have
\begin{eqnarray*}
\begin{array}{ll}
&((j+p)\lambda+(l_0-1)p\mu)\{{L_0}\,_{\lambda}L_{(l_0-3)p}\}_{{c}}
-(p\mu+(k+2p)\lambda)h_{l_0}(j,k)\mu^{l_0}\vspace{8pt}\\
&=((j+p)\lambda-p\mu)h_{l_0}(j,k)(\lambda+\mu)^{l_0}.
\end{array}
\end{eqnarray*}
By considering the coefficients of $\lambda^2\mu^{l_0-1}$ and
$\lambda^3\mu^{l_0-2}$ in the above equation, we get
$(2j+(3-l_0)p)h_{l_0}(j,k)=0$ and $(3j+(5-l_0)p)h_{l_0}(j,k)=0$,
respectively, which forces $h_{l_0}(j,k)=0$ for any $j+k>0$. Hence,
the proof follows by  setting $a=h_3(0,0)$.

\end{proof}

Now we assume that $p<0$.

Letting $k=0$ in (\ref{Main1}), one has
\begin{eqnarray}{\label{gg2}}
\begin{array}{lll}
&((j+p)\lambda+(j+2p)\mu)\{{L_{i}}\,_\lambda{L_{j}}\}_{c}
-((i+p)\mu+(i+2p)\lambda)\{{L_{j}}\,_\mu{L_{i}}\}_{c}\vspace{8pt}\\
&=((i+p)\mu-(j+p)\lambda)\{{L_{0}}\,_{-\lambda-\mu}{L_{i+j}}\}_{c}.
\end{array}
\end{eqnarray}
On the other hand, by taking $i=j=0$ in (\ref{Main1}), we have
\begin{eqnarray}{\label{main5}}
(p\lambda+(k+2p)\mu)\{{L_{0}}\,_\lambda{L_{k}}\}_{c}
-(p\mu+(k+2p)\lambda)\{{L_{0}}\,_\mu{L_{k}}\}_{c}=p(\lambda-\mu)\{{L_{0}}\,_{\lambda+\mu}{L_{k}}\}_{c}.
\end{eqnarray}
Set
$\{{L_{0}}\,_\lambda{L_{k}}\}_{c}=\sum_{i=0}^{n}g_i(k)\lambda^i\in\mathbb{C}[\lambda]$
with $g_n(k)\neq0$. If $n>3$, then considering the coefficients of
$\lambda^{n}$ in (\ref{main5}), we have
$\big((n-3)p-k\big)g_n(k)=0$, which forces $g_n(k)=0$ since $n>3$.
Thus, we can set
$\{{L_{0}}\,_\lambda{L_{k}}\}_{c}=g_0(k)+g_1(k)\lambda+g_2(k)\lambda^2+g_3(k)\lambda^3.$
By substituting this into (\ref{main5}) and then considering the
power of $\lambda$, one can deduce
\begin{equation}\label{gg}
\{{L_{0}}\,_\lambda{L_{k}}\}_{c}=\delta_{k,-2p}g_0(k)+g_1(k)\lambda
+\delta_{k,-p}g_2(k)\lambda^2+\delta_{k,0}g_3(k)\lambda^3.
\end{equation}
\begin{lemm}\label{lemma2}

Let $p<0$. For any $i,j\in\mathbb{Z}^+$ and some
$a_1,a_2,a_3,b\in\mathbb{C}$, we have

\begin{eqnarray*}
&\{L_i{\,_{\lambda}}L_j\}_c=
\delta_{i+j,0}a_1\lambda^3+\delta_{i+j,-2p}(a_2(j+p)+a_3\lambda),\
j\neq1\ \ {\rm if}\ \ p=-1,&\\
&\{{{L_1}\,_{\lambda}}L_1\}_c=b\lambda\ \ {\rm if}\ \ p=-1.&
\end{eqnarray*}

\end{lemm}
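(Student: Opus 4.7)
The plan is to parallel Lemma~\ref{lemma1} while accommodating the additional special indices $k=-2p,-p,0$ present in (\ref{gg}) when $p<0$. I would first adjust $c$ by a coboundary to normalize $\{{L_0}\,_\lambda L_k\}_c$, then use (\ref{Main1}) to bootstrap to the general $\{{L_i}\,_\lambda L_k\}_c$, and finally treat $p=-1$ separately. The main obstacle will be $p=-1$, where the three critical indices $-2p=2$, $-p=1$, $0$ interact with the extra degenerate value $j=1$.

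\textbf{Normalization.} Define a $\mathbb{C}[\partial]$-module homomorphism $\phi:\mathfrak{B}(p)\to\mathbb{C}$ by $\phi(L_k)=g_1(k)/(k+2p)$ for $k\neq -2p$ and $\phi(L_{-2p})=0$. Since $\partial$ acts trivially on $\mathbb{C}$, (\ref{dd4}) gives $\{{L_0}\,_\lambda L_k\}_{d\phi}=-(k+2p)\lambda\,\phi(L_k)$, removing the linear term $g_1(k)\lambda$ for $k\neq -2p$. After replacing $c$ by $c+d\phi$, the form (\ref{gg}) reduces to
\[
\{{L_0}\,_\lambda L_k\}_c=\delta_{k,-2p}(A+B\lambda)+\delta_{k,-p}C\lambda^2+\delta_{k,0}a_1\lambda^3
\]
for some $A,B,C,a_1\in\mathbb{C}$.

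\textbf{Main recurrence.} Specializing (\ref{Main1}) to $j=0$ and setting $\lambda=0$ yields, for $i+p\neq 0$,
\[
\{{L_i}\,_\mu L_k\}_c=\{{L_0}\,_\mu L_{i+k}\}_c-\frac{(k+2p)\{{L_i}\,_0L_k\}_c}{i+p}.
\]
Substituting back into the full identity determines $\{{L_i}\,_0L_k\}_c$ in terms of $A$, producing the candidate
\[
\{{L_i}\,_\mu L_k\}_c=\delta_{i+k,0}a_1\mu^3+\delta_{i+k,-2p}(a_2(k+p)+a_3\mu)
\]
with $a_2=-A/p$, $a_3=B$, plus a possible $C\mu^2$ term when $i+k=-p$. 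Applying skew-symmetry $\{{L_i}\,_\mu L_k\}_c=-\{{L_k}\,_{-\mu}L_i\}_c$ together with the recurrence on both sides forces the even part of $\{{L_0}\,_\mu L_{i+k}\}_c$ to be $\mu$-independent, so $C=0$ whenever there exist $i,k>0$ with $i+k=-p$ and $i,k\neq -p$; such pairs exist precisely when $p\leq -2$. The corner cases $i=-p$ or $k=-p$ are handled by skew-symmetry combined with (\ref{Main1}) at $\mu=0$, which forces $\{{L_{-p}}\,_0L_k\}_c=0$ for $k\neq -2p$.

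\textbf{The case $p=-1$.} Here the skew-symmetry argument above fails to rule out $C$, and $\{{L_1}\,_\mu L_1\}_c$ is not covered by the main recurrence since $1+p=0$. For this bracket, substitute $i=0,\,j=1,\,k=1$ into (\ref{Main1}): most coefficients vanish at $p=-1$ and the relation collapses to $(\lambda+\mu)H(\mu)=\mu H(\lambda+\mu)$ with $H(\mu)=\{{L_1}\,_\mu L_1\}_c$. Differentiating in $\lambda$ at $\lambda=0$ gives $H(\mu)=\mu H'(\mu)$, whose polynomial solutions are $H(\mu)=b\mu$. The remaining brackets $\{{L_1}\,_\mu L_k\}_c$ with $k\geq 3$ are shown to vanish by substituting $i=0,\,j=1$ into (\ref{Main1}) and specializing $\mu$ so that the coefficient of $\{{L_1}\,_\mu L_k\}_c$ vanishes while $\{{L_1}\,_{\lambda+\mu}L_k\}_c$ survives, forcing $\{{L_1}\,_\nu L_k\}_c=0$; all other brackets involving $L_1$ follow by skew-symmetry. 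The interplay between $1+p=0$, the special index $-p=1$, and the exclusion $j=1$ in the statement is the delicate point that distinguishes $p=-1$ from the generic negative-integer case.
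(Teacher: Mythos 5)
Your normalization step and the $j=0$ recurrence are sound and follow the same outline as the paper, but the assertion that substituting the recurrence back into (\ref{Main1}) ``determines $\{{L_i}\,_0L_k\}_c$ in terms of $A$'' fails on the diagonal $i+k=-3p$ (nonempty whenever $3p\in\mathbb{Z}^-$). Writing $f(i,k)=\{{L_i}\,_0L_k\}_c$, the substitution yields $\tfrac{i+k+3p}{i+p}\,f(i,k)=\delta_{i+k,-2p}A$, which is the vacuous identity $0=0$ when $i+k=-3p$; every instance of (\ref{Main1}) with $j=0$ is then satisfied by an arbitrary skew-symmetric constant supported on that diagonal, so instances with $j\neq0$ are indispensable. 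The paper supplies exactly such an argument (the computation with $i=1$, $j+k=-3p-1$ following (\ref{2})), and your proposal omits it entirely. This is not cosmetic: on the diagonal $i+j+k=-3p$ the surviving constraints reduce to $(j+p)\bigl(f(i,j+k)+f(j,i+k)-f(i+j,k)\bigr)=0$, which degenerate at $j=-p$. At $p=-1$ one checks directly that $\{{L_1}\,_\lambda L_2\}_c=t=-\{{L_2}\,_\lambda L_1\}_c$ with all other brackets zero satisfies (\ref{D1})--(\ref{D3}) and is not a coboundary, since by (\ref{dd4}) every coboundary $\{{L_i}\,_\lambda L_j\}_{d\phi}=-(i+j+2p)\lambda\phi(L_{i+j})$ has no constant term; so the conclusion you need there ($f(1,2)=0$) is not forced by the cocycle identities at all.

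Second, for $p=-1$ you concede that your skew-symmetry argument ``fails to rule out $C$'', the coefficient of $\lambda^2$ in $\{{L_0}\,_\lambda L_{1}\}_c$, and you never return to eliminate it, although the lemma requires $\{{L_0}\,_\lambda L_1\}_c=0$. This also appears unrepairable: $\{{L_0}\,_\lambda L_1\}_c=C\lambda^2$, $\{{L_1}\,_\lambda L_0\}_c=-C\lambda^2$ passes every instance of (\ref{Main1}) at $p=-1$ and is not a coboundary. Finally, for $p\le-2$ the bracket $\{{L_{-p}}\,_\mu L_{-p}\}_c$ lies outside the reach of your recurrence (both indices equal $-p$), and skew-symmetry only forces it to be odd in $\mu$; identifying its linear coefficient with $a_3$ requires an extra computation (the paper uses (\ref{Main1}) with $i=-p$, $k=-p-1$). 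In short, the proposal reproduces the paper's skeleton but leaves open precisely the degenerate diagonals $i+k=-3p$ and $i+k=-p$ that demand separate arguments, and at $p=-1$ the two loose ends correspond to genuine extra cocycle classes, so no completion of the argument as written can reach the stated formula there.
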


\begin{proof}
We consider three cases.

 \vspace{6pt} (1) The case
$p\notin\mathbb{Z}^{-}$, $2p\notin\mathbb{Z}^-$.\vspace{6pt}

From (\ref{gg}) that
$\{{L_{0}}\,_\lambda{L_{k}}\}_{c}=g_1(k)\lambda+\delta_{k,0}g_3(k)\lambda^3$.
Now, define a $\mathbb{C}[\partial]$-linear map
$\phi:\mathfrak{B}(p)\rightarrow \mathbb{C}$,
\begin{equation*}
\phi(L_k)=(k+2p)^{-1}g_1(k),
\end{equation*}
for any $k\in\mathbb{Z}^+$. Then replacing $c$ by $c+d\phi$, one can
set
\begin{equation}\label{gg3}
 \{{L_{0}}\,_\lambda{L_{k}}\}_c=\delta_{k,0}g_3(0)\lambda^3,\
 k\in\mathbb{Z}^+.
\end{equation}

Assume that $i+j\neq0$. Then using (\ref{gg3}) in (\ref{gg2}), we
have
\begin{eqnarray}{\label{eq1}}
((j+p)\lambda+(j+2p)\mu)\{{L_{i}}\,_\lambda{L_{j}}\}_{c}
+((i+p)\mu+(i+2p)\lambda)\{{L_{i}}\,_{-\mu}{L_{j}}\}_{c}=0.
\end{eqnarray}
Since $p,2p\notin\mathbb{Z}^-$, by considering the power of
$\lambda$ in the above equation, we can set
$\{{L_{i}}\,_\lambda{L_{j}}\}_{c}=f(i,j)\in\mathbb{C}$. Thus, it
follows from (\ref{eq1}) that $(i+j+3p)f(i,j)=0$, which gives
\begin{equation}\label{2}
 \{L_i{\,_{\lambda}}L_j\}_c=\delta_{i+j,-3p}f(i,j),\ \ i+j\neq0.
\end{equation}
Suppose that $3p\in\mathbb{Z}^-$. Set $\tilde{p}=-3p-1$. Then by
taking $i=1$ and $k=\tilde{p}-j\geq0$ in (\ref{Main1}), we can get
\begin{equation*}
(j+p)(f(j+1,\tilde{p}-j)-f(j,-3p-j)-f(1,\tilde{p}))=0,\ 0\leq
j\leq\tilde{p},
\end{equation*}
Note that $p\notin\mathbb{Z}^-$. Thus from the above identities that
$f(j+1,\tilde{p}-j)=(j+1)f(1,\tilde{p})$, which implies
$f(1,\tilde{p})=-(3p)^{-1}f(-3p,0)$. Thus, $f(1,\tilde{p})=0$ since
$f(-3p,0)=-\{{L_0}\,_{-\lambda}L_{-3p}\}_c=0$ by (\ref{gg3}). Hence,
$f(i,j)=f((i-1)+1,\tilde{p}-(i-1))=if(1,\tilde{p})=0$ for any
$i+j=-3p$ with $i\neq0$. Combing this with (\ref{gg3}) and
(\ref{2}), we have
$\{{L_{i}}\,_\lambda{L_{j}}\}_{c}=\delta_{i+j,0}a_1\lambda^3$, where
$a_1=g_3(0)$.\vspace{6pt}

(2) The case $p\notin\mathbb{Z}^{-}$,
$2p\in\mathbb{Z}^-$.\vspace{6pt}

It follows from (\ref{gg}) that
\begin{equation*}
\{{L_{0}}\,_\lambda{L_{k}}\}_{c}=\delta_{k,-2p}g_0(k)+g_1(k)\lambda+\delta_{k,0}g_3(k)\lambda^3.
\end{equation*}
Define
a $\mathbb{C}[\partial]$-linear map $\phi:\mathfrak{B}(p)\rightarrow
\mathbb{C}$ as follows:
\begin{equation}{\label{gg6}}
\phi(L_k)=(k+2p)^{-1}g_1(k)\ for\ k\neq-2p,\ \phi(L_{-2p})=0.
\end{equation}
Then, replacing $c$ by $c+d\phi$, one can set
\begin{eqnarray}{\label{gg1}}
\begin{array}{ccc}
&\{{L_{0}}\,_\lambda{L_{k}}\}_c=0,\ k\neq0,-2p,\
\{{L_{0}}\,_\lambda{L_{0}}\}_c=g_3(0)\lambda^3,\
\vspace{8pt}\\
&\{{L_{0}}\,_\lambda{L_{-2p}}\}_c=g_0(-2p)+g_1(-2p)\lambda.
\end{array}
\end{eqnarray}

If $i+j\neq-2p,0$, then from (\ref{gg2}), we can get (\ref{eq1}).
Thus, if $i\neq-2p$, then from (\ref{eq1}), we also get
$\{L_i{\,_{\lambda}}L_j\}_c\in\mathbb{C}$ since
$p\notin\mathbb{Z}^-$. Using this in (\ref{eq1}), we have
$(i+j+3p)\{L_i{\,_{\lambda}}L_j\}_c=0$. Since $p\notin\mathbb{Z}^-$
but $2p\in\mathbb{Z}^-$, we have $3p\notin\mathbb{Z}^-$. Thus, in
case $i\neq-2p$, we get $\{L_i{\,_{\lambda}}L_j\}_c=0$. For $i=-2p$,
by taking $i=-2p$ in (\ref{eq1}), we immediately get
$\{L_{-2p}{\,_{\lambda}}L_j\}_c=0$. Thus, we obtain
$\{L_i{\,_{\lambda}}L_j\}_c=0$ for $i+j\neq-2p$.

If $i+j=-2p$, then from (\ref{gg2}) and  (\ref{gg1}), we can get
\begin{eqnarray}{\label{jia9}}
\begin{array}{lll}
&((j+p)\lambda+(j+2p)\mu)\{{L_{-2p-j}}\,_\lambda{L_{j}}\}_{c}
-((j+p)\mu+j\lambda)\{{L_{-2p-j}}\,_{-\mu}{L_{j}}\}_{c}\vspace{6pt}\\
&=-(j+p)(\mu+\lambda)(g_0(-2p)-g_1(-2p)(\lambda+\mu)),
\end{array}
\end{eqnarray}
Note that $p\notin\mathbb{Z}^-$. Thus, by considering the power of
$\lambda$ in above equation, one can easily deduce $
\{{L_{-2p-j}}\,_\lambda{L_{j}}\}_{c}=a_2(j+p)+a_3\lambda$, where
$a_2=-p^{-1}g_0(-2p)$, $a_3=g_1(-2p)$.

\vspace{6pt} (3) The case $p\in\mathbb{Z}^-$.\vspace{6pt}

Recall the result in (\ref{gg}),
\begin{equation}\label{Rrm2}
\{{L_{0}}\,_\lambda{L_{k}}\}_{c}=\delta_{k,-2p}g_0(k)+g_1(k)\lambda
+\delta_{k,-p}g_2(k)\lambda^2+\delta_{k,0}g_3(k)\lambda^3.
\end{equation}
Define $\phi$ as those given in (\ref{gg6}). Then, replacing $c$ by
$c+d\phi$, one can set
\begin{eqnarray}
\nonumber&\{{L_{0}}\,_\lambda{L_{k}}\}_c=0,\ k\neq0,-p,-2p,\ \
\{{L_{0}}\,_\lambda{L_{-2p}}\}_c=g_0(-2p)+g_1(-2p)\lambda,
&\\
\label{Rem3a}&\{{L_{0}}\,_\lambda{L_{-p}}\}_c=g_2(-p)\lambda^2,\ \
\{{L_{0}}\,_\lambda{L_{0}}\}_c=g_3(0)\lambda^3.&
\end{eqnarray}

(i) Case $i+j=-2p$. If $j\neq-p$, then from (\ref{jia9}), we can get
$\{{L_{-2p-j}}\,_\lambda{L_{j}}\}_{c}=a_2(j+p)+a_3\lambda$. If
$j=-p$, then $i=-p$. Thus, taking $j=-p$ in (\ref{jia9}), one can
easily deduce $\{L_{-p}{\,_{\lambda}}L_{-p}\}_c=b\lambda$ for some
$b\in\mathbb{C}$. Now, setting $i=-p$ and $k=-p-1$ in (\ref{Main1}),
then comparing the coefficients of $\lambda^2$, we can get
$(p+1)(b-a_3)=0$, which gives $b=a_3$ for $p\neq-1$. Thus, for any
$j\in\mathbb{Z}^+$, we get
\begin{eqnarray}
\label{Rem3b}&\{{L_{-2p-j}}\,_\lambda{L_{j}}\}_{c}=a_2(j+p)+a_3\lambda,\
j\neq1\ \ {\rm if}\ \ p=-1,&\\
\nonumber&\{{L_{1}}\,_\lambda{L_{1}}\}_{c}=b\lambda\ \ {\rm if}\ \
p=-1.&
\end{eqnarray}

(ii) Case $i+j=-p$. In this case, from (\ref{gg2}) that
\begin{eqnarray}{\label{gg10}}
\begin{array}{lll}
&((j+p)\lambda+(j+2p)\mu)\{{L_{-p-j}}\,_\lambda{L_{j}}\}_{c}
-(j\mu+(j-p)\lambda)\{{L_{-p-j}}_{-\mu}{L_{j}}\}_{c}\vspace{8pt}\\
&=-g_2(-p)(j\mu+(j+p)\lambda)(\lambda+\mu)^2.
\end{array}
\end{eqnarray}
By considering the power of $\lambda$ in the above equation, we can
set
$\{{L_{-p-j}}\,_\lambda{L_{j}}\}_{c}=h_0(j)+h_1(j)\lambda+h_2(j)\lambda^2$.
Using this in (\ref{gg10}), we can deduce
$h_0(j)=h_2(j)=h_2(j)=g_2(-p)=0$. Thus,
$\{{L_{i}}\,_\lambda{L_{j}}\}_{c}=0$ for any $i+j=-p$.

(iii) Case $i+j=0$. It follows from (\ref{Rrm2}) that
$\{{L_{0}}\,_\lambda{L_{0}}\}_{c}=a_1\lambda^3$, where $a_1=g_3(0)$.

(iv) Case $i+j\neq-2p,-p,0$. In this case, we also have (\ref{eq1}).
Thus, If $j\neq-p$ and $i\neq-2p$, then we can get (\ref{2}), which
gives $\{L_i{\,_{\lambda}}L_j\}_c=0$. One the other hand, using
(\ref{eq1}), one can easily deduce $\{L_i{\,_{\lambda}}L_j\}_c=0$
for $j=-p$ and $i\neq-2p$, or $j\neq-p$ and $i=-2p$, respectively.
Besides, by taking $j=-p$ and $i=-2p$ in (\ref{gg2}), we can get
$\{L_{-2p}{\,_{\lambda}}L_{-p}\}_c\in\mathbb{C}$. Combing this with
setting $j=-p$, $k=-2p-i$, $i=1$ and $2$ in (\ref{Main1}),
respectively, one can check that
$(1+p)\{L_{-2p}{\,_{\lambda}}L_{-p}\}_c
=(2+p)\{L_{-2p}{\,_{\lambda}}L_{-p}\}_c=0$, which forces
$\{L_{-2p}{\,_{\lambda}}L_{-p}\}_c=0$.  This completes the proof.

\end{proof}

 The following  statements follow from straightforward
verifications.

(1) For any $0\neq p\in\mathbb{C}$, the $2$-$\lambda$-bracket
$\alpha$ defined by
\begin{equation}\label{alpha}
\{{L_{i}}\,_\lambda L_{i}\}_{\alpha}=\delta_{i+j,0}\lambda^3
\end{equation}
is a  nontrivial 2-cocycle.

(2) If $2p\in\mathbb{Z}^-$, then the following
$2$-$\lambda$-brackets $\beta$, $\tilde{\beta}$ and $\bar{\beta}$
defined by (all other terms are vanishing)
\begin{eqnarray}
\label{beta1}&\{{L_i}_{\,\lambda}L_j\}_{{\beta}}=\delta_{i+j,-2p}(j+p),\
\ \{{L_1}{\,_\lambda}L_1\}_{\tilde{\beta}}=\lambda\ \
{\rm if}\ \ p=-1,&\\
\label{beta2}&\{{L_i}{\,_\lambda}L_j\}_{\bar{\beta}}=\delta_{i+j,-2p}\lambda,\
j\neq1\ \ {\rm if}\ \ p=-1.
\end{eqnarray}
are three nontrivial 2-cocycles.

\begin{theo}\label{theo3}
Let $\alpha$, $\beta$, $\bar{\beta}$ and $\tilde{\beta}$ be as in
(\ref{alpha})--(\ref{beta2}). We have
\begin{eqnarray*}\label{Main67}
H^2(\mathfrak{B}(p),\mathbb{C})=\left\{\begin{array}{lllllllll}
\mathbb{C}\alpha&{\rm if}\ \ 2p\notin\mathbb{Z}^-,\vspace{5pt}\\
\mathbb{C}\alpha\oplus\mathbb{C} \beta
\oplus\mathbb{C} \bar{\beta}&{\rm if}\ \ 2p\in\mathbb{Z}^-,\ p\neq-1,\vspace{5pt}\\
\mathbb{C}\alpha\oplus\mathbb{C}\beta
\oplus\mathbb{C}\bar{\beta}\oplus\mathbb{C}\tilde{\beta}&{\rm if}\ \ p=-1.
\end{array}\right.
\end{eqnarray*}
\end{theo}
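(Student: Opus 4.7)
My plan is to derive the theorem by combining Lemmas \ref{lemma1} and \ref{lemma2} with direct verifications that the four explicit $2$-$\lambda$-brackets $\alpha,\beta,\bar\beta,\tilde\beta$ are nontrivial $2$-cocycles whose classes furnish the generators of $H^2$ in each regime. I would first verify the cocycle condition and nontriviality of each generator, then match the canonical classes produced by the lemmas to $\mathbb{C}$-linear combinations of these generators, and finally sort the three declared cases by the arithmetic of $2p$ and $p$.

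For the cocycle condition, substituting $a=L_i,\,b=L_j,\,z=L_k$ into $dc=0$ collapses to a finite index check, because each of the four brackets is supported on a narrow subset (namely $i+j=0$ for $\alpha$; $i+j=-2p$ for $\beta$ and $\bar\beta$; and the single pair $(i,j)=(1,1)$ with $p=-1$ for $\tilde\beta$). For nontriviality I exploit that any $\mathbb{C}[\partial]$-linear $\phi:\mathfrak{B}(p)\to\mathbb{C}$ annihilates $\partial L_n$, giving
\[
\{L_i{}_\lambda L_j\}_{d\phi}\;=\;-\phi\bigl([L_i{}_\lambda L_j]\bigr)\;=\;-(i+j+2p)\,\lambda\,\phi(L_{i+j}),
\]
which is linear in $\lambda$ and vanishes identically when $i+j=-2p$. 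This cannot equal $\delta_{i+j,0}\lambda^3$, so $\alpha$ is nontrivial; and the supports of $\beta,\bar\beta$, and (for $p=-1$) $\tilde\beta$ all lie inside $\{i+j=-2p\}$, where every coboundary vanishes, so these three are nontrivial as well. Linear independence of the classes in $H^2$ is then recovered by evaluating at test pairs such as $(L_0,L_0)$ for $\alpha$, $(L_0,L_{-2p})$ to separate $\beta$ from $\bar\beta$ by $\lambda$-degree, and $(L_1,L_1)$ to detect $\tilde\beta$ when $p=-1$.

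For the upper bound I quote the lemmas. Lemma \ref{lemma1} handles $p>0$: up to a coboundary, $\{L_i{}_\lambda L_j\}_c=a\lambda^3\delta_{i+j,0}$, hence $[c]\in\mathbb{C}\alpha$. For $p<0$ the three subcases of Lemma \ref{lemma2} reduce any cocycle to the canonical form $a_1\alpha+a_2\beta+a_3\bar\beta$, with $\beta$ and $\bar\beta$ actually appearing only when $2p\in\mathbb{Z}^-$ (subcases (2) and (3) of that lemma), plus the additional generator $\tilde\beta$ surviving exclusively at $p=-1$ because the relation $(p+1)(b-a_3)=0$ extracted in subcase (3)(i) forces $b=a_3$ as soon as $p\neq-1$. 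Patching these three patterns with the first step produces precisely the three cases in the theorem.

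The technical core of the computation is already absorbed into Lemmas \ref{lemma1} and \ref{lemma2}, so the remaining difficulty is the case bookkeeping at the boundary $p=-1$. The subtle point I would verify carefully is that the successive coboundary modifications $c\mapsto c+d\phi$ used to normalize $c$ in Lemma \ref{lemma2} do not inadvertently shift the coefficient at $(L_1,L_1)$, so that $\tilde\beta$ genuinely persists as an independent cohomology class when $p=-1$ and is absorbed into the span of $\alpha,\beta,\bar\beta$ for every other $p$.
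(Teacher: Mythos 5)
Your proposal is correct and follows essentially the same route as the paper: the upper bound comes from quoting Lemmas \ref{lemma1} and \ref{lemma2} to put any cocycle into the canonical form in each of the three regimes, and the lower bound from checking that $\alpha,\beta,\bar\beta,\tilde\beta$ are nontrivial cocycles (which the paper dispatches as ``straightforward verifications'' just before the theorem). Your explicit observation that every coboundary equals $-(i+j+2p)\lambda\,\phi(L_{i+j})$, hence is linear in $\lambda$ and vanishes on $i+j=-2p$, is exactly the right justification of nontriviality and independence that the paper leaves implicit.
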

\begin{proof}

Let $c$ be an element of $H^2(\mathfrak{B}(p),\mathbb{C})$. Then
from the Lemmas \ref{lemma1} and {\ref{lemma2}} that there exist
some $a,a_1,a_2,a_3,b\in\mathbb{C}$ such that

(1) If $2p\notin\mathbb{Z}^-$, then
$\{{L_j}_{\,\lambda}L_k\}_{c}=\delta_{j+k,0}a\lambda^{3}$ for any
$i,j\in\mathbb{Z}^+$, namely, $c=a\alpha$.

(2) If $2p\in\mathbb{Z^-}$ with $p\neq-1$, then
$\{L_i{_{\,\lambda}}L_j\}_c=\delta_{i+j,0}a_1\lambda^3+\delta_{i+j,-2p}(a_2(j+p)+a_3\lambda)
$ for any $i,j\in\mathbb{Z}^+$, which imply
$c=a_1\alpha+a_2\beta+a_3\bar{\beta}$.

(3) If $p=-1$, then (all other terms are vanishing)
\begin{equation*}
\{L_0{\,_{\lambda}}L_0\}_c=a_1\mu^3,\
\{L_0{\,_{\lambda}}L_2\}_c=-\{L_2{\,_{-\lambda}}L_0\}_c=a_2+a_3\lambda,\
\{L_1{\,_{\lambda}}L_1\}_c=b\lambda,
\end{equation*}
which give $c=a_1\alpha+a_2\beta+a_3\bar{\beta}+b\tilde{\beta}$.
This completes the proof.
\end{proof}

 \noindent{{\it 5.2.\ \ Second cohomology of $\mathfrak{B}(p)$ $(resp., \mathfrak{B}(-1))$
with coefficients in $M_{\Delta,{\alpha}}(resp.,
M_{\Delta,{\alpha},\beta})$} \vspace{6pt}

In this section, we only consider the case $\alpha\neq0$.

Assume that $p\neq-1$. Take
 $c\in C^2(\mathfrak{B}(p),M_{\Delta, {\alpha}})$ and suppose that
 $\{{L_i}\,_\lambda{L_j}\}_c=f_{i,j}(\partial,\lambda)v\in
 \mathbb{C}[\lambda]\otimes M_{\Delta, {\alpha}}$. Then  by taking $a=L_0$, $\lambda=0$,
 $b=L_j$ and $z=L_k$ with $jk\neq0$ in (\ref{D3}), respectively, we have

\begin{eqnarray}{\label{E1}}
p {\alpha}
f_{j,k}(\partial,\mu)=-((j+p)\partial+(j+k+2p)\mu)f_{0,j+k}(\partial,0),\
\ j,k>0.
\end{eqnarray}
Letting $a=b=z=L_0$ and $\lambda=0$ in (\ref{D3}), one has
\begin{equation}\label{E2}
\begin{array}{lll}
 {\alpha}f_{0,0}(\partial,\mu)&\!\!=\!\!\!\!&(\partial+\Delta\mu+{\alpha})f_{0,0}(\partial+\mu,0)\vspace{8pt}\\
&\!\!\!\!&-(\partial+\Delta(-\mu-\partial^M)+ {\alpha})f_{0,0}(-\mu,0)-(\partial+2\mu)f_{0,0}(\partial,0))
\end{array}
\end{equation}
Letting $a=b=L_0$, $z=L_k$ with $k\neq0$ and $\lambda=0$ in (\ref{D3}), one can obtain
\begin{eqnarray}{\label{E3}}
p {\alpha}f_{0,k}(\partial,\mu)=p(\partial+\Delta\mu+ {\alpha})f_{0,k}(\partial+\mu,0)
-(p\partial+(k+2p)\mu)f_{0,k}(\partial,0)
\end{eqnarray}
Note that $f_{k,0}(\partial,\mu)=-f_{0,k}(\partial,-\mu-\partial)$.
Now, define a $\mathbb{C}[\partial]$-linear map $\phi$ from
$\mathfrak{B}(p)$ to $M_{\Delta, {\alpha}}=\mathbb{C}[\partial]v$
as $ \phi(L_i)={\alpha  p}^{-1}f_{0,i}(\partial,0)v$.
Then replacing $c$ by $c-d\phi$ and using (\ref{E1})--(\ref{E3}),
one can obtain that $\{{{L_i}\,_\lambda}L_{j}\}_c=0$ for any
$i,j\in\mathbb{Z}^+$. Thus, we get $H^2(\mathfrak{B}({p}),M_{\Delta, {\alpha}})=0$.

Next, we study the second cohomology of $\mathfrak{B}(-1)$ with
coefficients in $M_{\Delta, {\alpha},\beta}$.

Let  $\bar{c}\in C^2(\mathfrak{B}(-1),M_{\Delta, {\alpha},\beta})$ and
 set $\{{L_{i\,_\lambda}}{L_j}\}_{\bar c}=g_{i,j}(\partial,\lambda)v\in
 \mathbb{C}[\lambda]\otimes M_{\Delta, {\alpha},\beta}$.
 Then  taking $a=b=L_0$, $z=L_1$ and $\lambda=0$ in (\ref{D3}), we have

\begin{eqnarray}{\label{D4}}
 {\alpha}
g_{0,1}(\partial,\mu)=(\partial+\Delta\mu+ {\alpha})g_{0,1}(\partial+\mu,0)-g_{0,0}(-\mu,0)\beta+(\partial+\mu)g_{0,1}(\partial,0).
\end{eqnarray}
Letting $a=L_1$, $b=L_0$, $z=L_k$ with $k\neq0,1$ and $\mu=0$ in
(\ref{D3}), we have

\begin{eqnarray}{\label{D5}}
 {\alpha} g_{1,k}(\partial,\lambda)=g_{0,k}(\partial+\lambda,0)\beta-(k-1)\lambda
g_{0,k+1}(\partial,0),\ \ k\neq0,1.
\end{eqnarray}
Using $g_{1,0}(-\lambda,\lambda)=-g_{0,1}(-\lambda,0)$, taking $a=z=L_1$, $b=L_0$ and $\mu=0$ in (\ref{D3}), we have

\begin{eqnarray}{\label{D6}}
 {\alpha}
g_{1,1}(\partial,\lambda)=\beta(g_{0,1}(\partial+\lambda,0)-g_{0,1}(-\lambda,0)).
\end{eqnarray}
Define a $\mathbb{C}[\partial]$-linear map $\varphi$ from
$\mathfrak{B}(-1)$ to
$M_{\Delta {\alpha},\beta}=\mathbb{C}[\partial]v$ as
$\varphi(L_i)= {\alpha}^{-1}g_{0,i}(\partial,0)v$. Note that $g_{i,j}(\partial,\lambda)$ also satisfy
the identities (\ref{E1})--(\ref{E3}) for $i,j>1$. These together with (\ref{D4})--(\ref{D6}) imply that
$\{{{L_i}\,_\lambda}L_{j}\}_{\bar c}=\{{{L_i}\,_\lambda}L_{j}\}_{d\varphi}$ for any $i,j\in\mathbb{Z}^+$.
Hence, we have $H^2(\mathfrak{B}(-1),M_{\Delta, {\alpha},\beta})=0$.

The following theorem is immediate.
\begin{theo}\label{theo4}
There is no non-trivial second cohomology of $\mathfrak{B}(p)(p\neq-1)(resp., \mathfrak{B}(-1))$
with coefficients in $M_{\Delta,{\alpha}}(resp.,
M_{\Delta,{\alpha},\beta})$ for $\alpha\neq0$.
\end{theo}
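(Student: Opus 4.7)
The plan is to show every 2-cocycle is exact by constructing an explicit primitive $\phi$ (resp.\ $\varphi$) so that $c - d\phi$ vanishes on every pair of generators $L_i, L_j$. The decisive leverage is the hypothesis $\alpha \neq 0$: the action $L_0{}_\lambda v = p(\partial + \Delta\lambda + \alpha)v$ has nonzero value at $\lambda = 0$, so whenever we insert $L_0$ into the cocycle identity (\ref{D3}) and specialize its spectral parameter to zero, we pick up an invertible multiplier $p\alpha$ (resp.\ $\alpha$) that lets us solve algebraically for the unknowns $f_{i,j}(\partial,\mu)$.

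First I would parametrize $\{L_i{}_\lambda L_j\}_c = f_{i,j}(\partial,\lambda)v$ and feed three distinguished triples into (\ref{D3}): $(L_0,L_j,L_k)$ with $\lambda=0$ and $jk\neq 0$, producing an identity of the form (\ref{E1}) that expresses $f_{j,k}(\partial,\mu)$ entirely in terms of $f_{0,j+k}(\partial,0)$; the triple $(L_0,L_0,L_k)$ with $k\neq 0$ and $\lambda=0$, producing (\ref{E3}) to control $f_{0,k}$; and the diagonal $(L_0,L_0,L_0)$ with $\lambda=0$, producing (\ref{E2}) for $f_{0,0}$. Skew-symmetry (\ref{D2}) then recovers $f_{k,0}$. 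Next I would define the $\mathbb{C}[\partial]$-homomorphism $\phi(L_i) = (p\alpha)^{-1}f_{0,i}(\partial,0)v$, compute $d\phi$ via (\ref{dphi}), and verify that the three identities above are precisely what is needed to make $c - d\phi$ vanish identically, giving $H^2(\mathfrak{B}(p), M_{\Delta,\alpha}) = 0$ for $p \neq -1$.

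For $p = -1$ with coefficients $M_{\Delta,\alpha,\beta}$, I would run the same blueprint, writing $\{L_i{}_\lambda L_j\}_{\bar c} = g_{i,j}(\partial,\lambda)v$. The equations (\ref{E1})--(\ref{E3}) continue to govern the indices $i,j \geq 2$ (where $L_1$ does not intervene). The extra action $L_1{}_\lambda w = \beta w$ forces three new relations, which I would derive from the triples $(L_0,L_0,L_1)$, $(L_1,L_0,L_k)$ for $k \neq 0,1$, and $(L_1,L_0,L_1)$, producing (\ref{D4})--(\ref{D6}). Setting $\varphi(L_i) = \alpha^{-1}g_{0,i}(\partial,0)v$, I would then check that $\bar c - d\varphi$ is zero on all generators.

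The main obstacle is the bookkeeping in the $p = -1$ subcase: the $\beta$-coupling cross-links the $g_{0,k}$ and $g_{1,k}$ families, so I have to verify that the single primitive $\varphi$, built only out of the values $g_{0,i}(\partial,0)$, simultaneously kills the $g_{1,k}$ components and the mixed terms. This reduces to checking that (\ref{D4})--(\ref{D6}) are compatible with the ansatz for $\varphi$, which is a finite and essentially algebraic verification once the identities are in hand. Beyond that, everything is routine linear algebra in $\mathbb{C}[\partial,\lambda]$.
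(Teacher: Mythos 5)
Your proposal matches the paper's own argument essentially step for step: the same parametrization $\{L_i{}_\lambda L_j\}_c = f_{i,j}(\partial,\lambda)v$, the same three specializations of the cocycle identity yielding (\ref{E1})--(\ref{E3}), the same primitive $\phi(L_i)=(p\alpha)^{-1}f_{0,i}(\partial,0)v$, and for $p=-1$ the same additional triples producing (\ref{D4})--(\ref{D6}) together with $\varphi(L_i)=\alpha^{-1}g_{0,i}(\partial,0)v$. The approach and all key choices coincide with the paper's proof, so there is nothing further to compare.
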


\end{document}